\def\inte{\int\limits}
\def\m{\mathtt{M}}
\def\f{\mathtt{F}}
\def\d{\mathtt{D}}
\def\re{{\rm Re}}
\def\cl{black}
\newtheorem{thm}{Theorem}
\newtheorem*{thm*}{Theorem}
\newtheorem{corollary}{Corollary}
\theoremstyle{definition}
\theoremstyle{definition}\newtheorem{lemma}{Lemma}
\author[$\dagger$]{Eugenio P. 
Balanzario\thanks{ebg@matmor.unam.mx}}
\affil[$\dagger$]{Centro de Ciencias Matem\'aticas,
Universidad Nacional Aut\'onoma de M\'exico,
Apartado Postal 61-3 (Xangari),
Morelia  Michoac\'an, M\'exico}
\author[$\dagger$]{Daniel 
Eduardo C\'ardenas Romero}
\author[$\dagger$]{Richar Chac\'on Serna}
\begin{document}

\title{A smooth version of
Landau's explicit formula}

\maketitle

\begin{abstract}
We present
a smooth version of Landau's
explicit formula for the von Mangoldt
arithmetical function. \textcolor{\cl}{
Assuming the validity of the 
Riemann hypothesis}, we show that
in order to determine whether a
natural number $\mu$ is a prime number,
it is sufficient to know the
location of a number 
\textcolor{\cl}{of non trivial zeros}
of the Riemann zeta function of
order \textcolor{\cl}{$\mu\log^{\frac{3}{2}}\mu$}. 
Next we use Heisenberg's
inequality \textcolor{\cl}{to 
support the conjecture} 
that this number of zeros cannot be essentially 
diminished.
\end{abstract}

\noindent
{\bf Keywords}:
Prime numbers,
Riemann zeta function,
Explicit formulas,\\ Heisenberg's
inequality.

\medskip

\noindent
{\bf MathSciNet classification}:
11N37.

\section{Introduction}

In 1895 von Mangoldt \cite{Mangoldt}
gave a rigorous proof of
following explicit formula
first conjectured by Riemann,
\begin{eqnarray}\label{classical}
\sum_{n\leq x}
\Lambda(n)=x-\sum_\rho
\frac{x^\rho}{\rho}-
\frac{\zeta'(0)}{\zeta(0)}-
\frac{1}{2}\log\Big(1-\frac{1}{x^2}\Big).
\end{eqnarray}
In this formula, $\Lambda(n)$ is
the von Mangoldt arithmetical function
and the sum over
the non trivial zeros $\rho=\beta+
i\gamma$ of the Riemann zeta
function $\zeta(s)$ is understood in the
Cauchy principal value sense (see \cite{Davenport}).
The explicit formula allows us to translate
information about the Riemann zeta
zeros into information about the
distribution of prime numbers and it
is regarded as an important result
in the analytic theory of numbers. 

It is the aim of this note to consider
the following explicit formula
for the von Mangoldt function $\Lambda(n)$
and expose some of its consequences.
\begin{thm}\label{teo1}
For positive numbers $t$, $\alpha$
and $\lambda$, such that 
$\alpha\notin\mathbb{N}$, let 
\begin{eqnarray*}%\label{w}
w_{\alpha,\lambda}(t)=
\frac{t^{\alpha-1}}
{\lambda^\alpha\Gamma(\alpha)}
\exp\Big\{-\frac{t}{\lambda}\Big\}.
\end{eqnarray*}
Let $\mu=\alpha\lambda$. Then we have
\begin{eqnarray}\label{explict}
\sum_{n=1}^\infty
\Lambda(n)
w_{\alpha,\frac{\mu}{\alpha}}(n)=
1
-\sum_{\rho}
\frac{\Gamma(\alpha+\rho-1)}
{\kern.03cm\Gamma(\alpha)}
\Big(\frac{\mu}{\alpha}\Big)^{\rho-1}-
R(\mu,\alpha)
\end{eqnarray}
where
\[
R(\mu,\alpha) = 
\sum_{j=1}^\infty
\frac{(\alpha/\mu)^{2j+1}}
{(\alpha-1)_{2j+1}}-
\frac{(\alpha/\mu)^\alpha}{\Gamma(\alpha)}
\sum_{j=0}^\infty\frac{(-1)^j}{j!}
\Big(\frac{\alpha}{\mu}\Big)^j
\frac{\zeta'(1-\alpha-j)}
{\zeta(1-\alpha-j)}.
\]
Here, $(x)_n$ is the falling factorial.
\end{thm}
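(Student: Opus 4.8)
The plan is to realize the left side of~\eqref{explict} as an inverse Mellin integral and then shift the line of integration to the left, reading off the residues. First I would record the Mellin transform of the weight: the substitution $u=t/\lambda$ gives, for $\re s>1-\alpha$,
\[
\int_0^\infty w_{\alpha,\lambda}(t)\,t^{s-1}\,dt
=\frac{1}{\lambda^\alpha\Gamma(\alpha)}\int_0^\infty t^{\alpha+s-2}e^{-t/\lambda}\,dt
=\frac{\Gamma(\alpha+s-1)}{\Gamma(\alpha)}\,\lambda^{s-1}.
\]
Since $w_{\alpha,\lambda}$ is smooth on $(0,\infty)$ and this transform decays exponentially on vertical lines, Mellin inversion gives $w_{\alpha,\lambda}(n)=\frac{1}{2\pi i}\int_{(c)}\frac{\Gamma(\alpha+s-1)}{\Gamma(\alpha)}\lambda^{s-1}n^{-s}\,ds$ for every $c>1-\alpha$ and every $n\ge1$. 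Taking $\lambda=\mu/\alpha$ and any $c>1$ (then automatically $c>1-\alpha$, since $\alpha>0$), substituting into $\sum_n\Lambda(n)w_{\alpha,\mu/\alpha}(n)$, and interchanging summation with integration — licit because $\sum_n\Lambda(n)n^{-c}=-\zeta'(c)/\zeta(c)<\infty$ and $\int_{(c)}|\Gamma(\alpha+s-1)|\,|ds|<\infty$ — together with $\sum_n\Lambda(n)n^{-s}=-\zeta'(s)/\zeta(s)$ yields
\[
\sum_{n=1}^\infty\Lambda(n)\,w_{\alpha,\mu/\alpha}(n)
=\frac{1}{2\pi i}\int_{(c)}
\frac{\Gamma(\alpha+s-1)}{\Gamma(\alpha)}\Big(\frac{\mu}{\alpha}\Big)^{s-1}
\Big(-\frac{\zeta'(s)}{\zeta(s)}\Big)\,ds .
\]

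The heart of the argument is to move the contour from $\re s=c$ off to $\re s\to-\infty$ and apply the residue theorem to $G(s)=\frac{\Gamma(\alpha+s-1)}{\Gamma(\alpha)}(\mu/\alpha)^{s-1}(-\zeta'(s)/\zeta(s))$, whose poles all lie to the left of $\re s=c$ and fall into four families. The pole of $\zeta$ at $s=1$ gives the residue $\frac{\Gamma(\alpha)}{\Gamma(\alpha)}(\mu/\alpha)^0\cdot1=1$, the main term. A non-trivial zero $\rho$ of multiplicity $m_\rho$ contributes $-m_\rho\frac{\Gamma(\alpha+\rho-1)}{\Gamma(\alpha)}(\mu/\alpha)^{\rho-1}$, and altogether $-\sum_\rho\frac{\Gamma(\alpha+\rho-1)}{\Gamma(\alpha)}(\mu/\alpha)^{\rho-1}$. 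A trivial zero $s=-2k$ ($k\ge1$) contributes $-\frac{\Gamma(\alpha-2k-1)}{\Gamma(\alpha)}(\alpha/\mu)^{2k+1}=-\frac{(\alpha/\mu)^{2k+1}}{(\alpha-1)_{2k+1}}$, using $\Gamma(\alpha)=(\alpha-1)_{2k+1}\,\Gamma(\alpha-2k-1)$; summing over $k$ recovers the first series of $R(\mu,\alpha)$. Finally, at the poles $s=1-\alpha-j$ ($j\ge0$) of $\Gamma(\alpha+s-1)$ — distinct from all of the above because $\alpha\notin\mathbb{N}$ forces $1-\alpha-j\notin\mathbb{Z}$, so $1-\alpha-j$ is neither a zero nor the pole of $\zeta$ — the residue of the Gamma factor is $(-1)^j/j!$, and one gets $-\frac{1}{\Gamma(\alpha)}\frac{(-1)^j}{j!}(\alpha/\mu)^{\alpha+j}\frac{\zeta'(1-\alpha-j)}{\zeta(1-\alpha-j)}$; summing over $j$ and factoring $(\alpha/\mu)^{\alpha+j}=(\alpha/\mu)^\alpha(\alpha/\mu)^j$ recovers the $\zeta'/\zeta$-series of $R(\mu,\alpha)$. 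Collecting the four families and rearranging gives~\eqref{explict}.

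Two justifications remain. First, the three series must converge absolutely for the rearrangement to be legitimate: by Stirling, $|\Gamma(\alpha+\rho-1)|\ll e^{-\pi|\gamma|/2}|\gamma|^{\alpha+\beta-3/2}$ for $\rho=\beta+i\gamma$, which with $N(T)\ll T\log T$ makes $\sum_\rho\frac{\Gamma(\alpha+\rho-1)}{\Gamma(\alpha)}(\mu/\alpha)^{\rho-1}$ absolutely convergent (no Cauchy principal value, unlike in~\eqref{classical}), while the $j!$ and the growing products $(\alpha-1)_{2j+1}$ in the denominators, weighed against the bound $\zeta'(\sigma)/\zeta(\sigma)=O(\log(2+|\sigma|))$ as $\sigma\to-\infty$, make the two series defining $R(\mu,\alpha)$ converge. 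Second, the contour shift itself must be carried out along vertical lines $\re s=\sigma_N\to-\infty$ kept a fixed distance from every zero of $\zeta$ and every pole $1-\alpha-j$ of $\Gamma(\alpha+s-1)$ — possible since the trivial zeros sit at the even negative integers and the Gamma-poles at $1-\alpha-\mathbb{Z}_{\ge 0}$ with $\alpha\notin\mathbb{N}$ — estimating $G$ there from the super-exponential decay of $\Gamma(\alpha+s-1)$ in $|\im s|$ and the classical bounds $\zeta'(s)/\zeta(s)=O(\log^2|s|)$ off the zeros, $O(\log|s|)$ far to the left. The delicate point, which I expect to be the main obstacle, is showing that the horizontal connecting segments and the far-left vertical segments contribute $o(1)$ uniformly: there $\re s$ runs over an interval whose length grows with $N$, but since $\Gamma(\alpha+s-1)$ decays faster than any power of $|\im s|$ while $\zeta'/\zeta$ grows only logarithmically, a careful truncation drives those integrals to zero.
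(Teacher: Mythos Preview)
Your proposal is correct and follows essentially the same route as the paper: both express the sum as the contour integral $\frac{1}{2\pi i}\int_{(c)}\bigl(-\frac{\zeta'}{\zeta}(s)\bigr)\frac{\Gamma(\alpha+s-1)}{\Gamma(\alpha)}\lambda^{s-1}\,ds$ and shift the contour to the left, collecting residues at $s=1$, the non-trivial zeros, the trivial zeros $-2k$, and the Gamma poles $1-\alpha-j$. The paper carries out the ``delicate point'' you flagged by integrating over explicit rectangles with horizontal edges at heights $T_{kj}$ (the classical sequence with $|\zeta'/\zeta(\sigma+iT_j)|\ll\log^2 T_j$) and left edge at $\re s=-2j-1$, choosing the coupling $k>-(3/\pi)\log\lambda$ so that the exponential decay $e^{-\pi T_{kj}}$ of the Gamma factor beats the growth of $\lambda^\sigma$ along the horizontal segments; this is exactly the truncation you anticipated.
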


Formula (\ref{explict}) differs from
formula (\ref{classical}) in that
a weight function 
is introduced in the sum over $\Lambda(n)$.
This weight function $w_{\alpha,\lambda}(t)$
is actually the probability
density function of a gamma random
variable with mean value $\mu=\alpha\lambda$ 
and variance $\sigma^2=\alpha\lambda^2$. In case
that $\alpha=1$, we have that 
$w_{\alpha,\lambda}(t)$
reduces to a weight function that has
been used extensively in the analytic
theory of numbers. By allowing that $\alpha>1$,
we will be able to locate the probability
unit mass given by $w_{\alpha,\lambda}(t)$ at
any preassigned point $\mu$ of the positive
real line. 
Furthermore, with $\alpha$ and $\lambda$
as two free parameters,
we will be able not only to place the
bulk of the probability mass at $\mu$, but
also to control how much
this probability mass is concentrated
around this point of our interest.

When $\alpha\in\mathbb{N}$, then 
$w_{\alpha,\lambda}(t)$ can be considered
as the density function of the sum of
$\alpha$ independent exponentially 
distributed random variables. Thus,
if $\alpha$ is large, it follows from
the central limit theorem that
$w_{\alpha,\lambda}(t)$ is approximately
a bell shaped function. This observation
explains the given expression for $S(\mu)$
in the next theorem.

\begin{thm}\label{teo2}
Assume the Riemann hypothesis.
Let be given $\mu\in\mathbb{N}$.
Let $\sigma,\eta>0$ and $\theta>1$ be fixed
numbers. Let 
\begin{eqnarray}\label{ese}
S(\mu)=
\sum_{|j|\leq\eta\sigma}
\frac{\mu}{\mu+j}
\Lambda(\mu+j)
\exp\Big\{-\frac{1}{2}
\Big(\frac{j}{\sigma}\Big)^2\Big\}.
\end{eqnarray}
Let $\alpha=(\mu/\sigma)^2$ and
$\widetilde{w}_\alpha(\gamma)=
\Gamma(\alpha-1/2+i\gamma)/\Gamma(\alpha)$.
Then, as $\mu\to\infty$,
\begin{eqnarray}\label{smu}
\frac{S(\mu)}{\sigma\sqrt{2\pi}}=1-
\frac{\sqrt{\mu}}{\sigma}
\kern-.1cm
\sum_{|\gamma|\leq\mu\theta/\sigma}
\kern-.1cm
\kern-0cm
\widetilde{w}_\alpha(\gamma)
\Big(\frac{\sigma^2}{\mu}
\Big)^{i\gamma}
\kern-.1cm -R(\mu,\alpha)
+O\Big(\frac{\log\mu}
{\eta\kern.03cm
e^{\frac{1}{2}\eta^2}}
+\textcolor{\cl}{\frac{\mu^{\frac{3}{2}}
\log\mu}
{\theta\kern.03cm e^{\frac{1}{2}\theta^2}}
\Big)}.
\end{eqnarray}
\end{thm}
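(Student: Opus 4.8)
The plan is to derive (\ref{smu}) from the exact identity (\ref{explict}) of Theorem~\ref{teo1} and then carry out two truncations. Choose $\alpha=(\mu/\sigma)^2$ (we take $\sigma$ so that $\alpha\notin\mathbb{N}$, so that Theorem~\ref{teo1} applies) and $\lambda=\mu/\alpha=\sigma^2/\mu$, so that $w_{\alpha,\lambda}$ is the density of a gamma variable with mean $\mu$ and variance $\sigma^2$; then (\ref{explict}) becomes
\[
\sum_{n=1}^\infty\Lambda(n)\,w_{\alpha,\lambda}(n)=1-\sum_\rho\frac{\Gamma(\alpha+\rho-1)}{\Gamma(\alpha)}\Big(\frac{\sigma^2}{\mu}\Big)^{\rho-1}-R(\mu,\alpha).
\]
Under the Riemann hypothesis write $\rho=\tfrac12+i\gamma$; then $\alpha+\rho-1=\alpha-\tfrac12+i\gamma$ and $(\sigma^2/\mu)^{\rho-1}=(\sqrt{\mu}/\sigma)(\sigma^2/\mu)^{i\gamma}$, so the sum over zeros equals $(\sqrt{\mu}/\sigma)\sum_\gamma\widetilde{w}_\alpha(\gamma)(\sigma^2/\mu)^{i\gamma}$, a series that converges absolutely because $|\widetilde{w}_\alpha(\gamma)|$ decays faster than any power of $|\gamma|$ while the zeros are only polynomially dense. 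It remains to (a) replace $\sum_{n\ge1}\Lambda(n)w_{\alpha,\lambda}(n)$ by $S(\mu)/(\sigma\sqrt{2\pi})$, and (b) truncate the $\gamma$-sum at $|\gamma|\le\mu\theta/\sigma$, each with an error inside the one displayed in (\ref{smu}); $R(\mu,\alpha)$ is carried over verbatim.

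For (a), Stirling's formula for $\Gamma(\alpha)$ and the Taylor expansion of $\log(1+j/\mu)$ give, uniformly for $|j|\le\eta\sigma$,
\begin{eqnarray*}
w_{\alpha,\lambda}(\mu+j)&=&\frac{\alpha^\alpha e^{-\alpha}}{\mu\,\Gamma(\alpha)}\;\frac{\mu}{\mu+j}\;\exp\Big\{\alpha\log\big(1+\tfrac{j}{\mu}\big)-\tfrac{\alpha j}{\mu}\Big\}\\
&=&\frac{1}{\sigma\sqrt{2\pi}}\cdot\frac{\mu}{\mu+j}\,e^{-\frac12(j/\sigma)^2}\,\Big(1+O\big(\tfrac{1}{\alpha}+\tfrac{\eta^3\sigma}{\mu}\big)\Big),
\end{eqnarray*}
since $\sqrt{\alpha}/\mu=1/\sigma$ and $\alpha\big(\log(1+j/\mu)-j/\mu\big)=-\tfrac12(j/\sigma)^2+O\big(|j|^3/(\sigma^2\mu)\big)$; the factor $\mu/(\mu+j)$ placed in (\ref{ese}) is exactly what makes the gamma density agree with the Gaussian weight. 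Multiplying by $\Lambda(\mu+j)$, summing over $|j|\le\eta\sigma$, and using $\sum_{n\ge1}\Lambda(n)w_{\alpha,\lambda}(n)=O\big(1+\sqrt{\mu}\log^2\mu/\sigma\big)$ (Abel summation against $\psi(x)=\sum_{n\le x}\Lambda(n)$ with $\psi(x)-x\ll\sqrt{x}\log^2 x$ on the Riemann hypothesis) to absorb the relative error, one obtains $S(\mu)/(\sigma\sqrt{2\pi})=\sum_{|j|\le\eta\sigma}\Lambda(\mu+j)w_{\alpha,\lambda}(\mu+j)+o(1)$. The part cut off, $\sum_{|n-\mu|>\eta\sigma}\Lambda(n)w_{\alpha,\lambda}(n)$, is estimated by $\Lambda(n)\le\log n$ together with the gamma-tail bound $\sum_{|n-\mu|>\eta\sigma}w_{\alpha,\lambda}(n)\ll\eta^{-1}e^{-\eta^2/2}$ (the gamma law being close to $N(\mu,\sigma^2)$, with a Chernoff bound disposing of the far range $|n-\mu|\ge\mu/2$, which contributes only $e^{-c\mu^2/\sigma^2}$), and this yields the error term $O\big(\log\mu/(\eta e^{\eta^2/2})\big)$.

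For (b), I would exploit the decay of $\widetilde{w}_\alpha$. Either from $|\Gamma(x+it)/\Gamma(x)|^2=\prod_{n\ge0}\big(1+t^2/(x+n)^2\big)^{-1}$ with $x=\alpha-\tfrac12$, or from the complex Stirling formula, one gets $|\widetilde{w}_\alpha(t)|\ll\alpha^{-1/2}e^{-t^2/(2\alpha)}$ for $|t|\le\alpha^{2/3}$, say (and super-exponentially small values beyond), and, because $\alpha=(\mu/\sigma)^2$, the truncation height $|t|=\mu\theta/\sigma$ sits at $|t|/\sqrt{\alpha}=\theta$. Combining this with $N(T+1)-N(T)\ll\log(T+2)$ for the zero-counting function $N$ and partial summation, the substitution $u=t\sigma/\mu$ gives
\[
\sum_{|\gamma|>\mu\theta/\sigma}|\widetilde{w}_\alpha(\gamma)|\ \ll\ \int_\theta^\infty e^{-u^2/2}\log\!\big(u\mu/\sigma+2\big)\,du+e^{-c\mu^2/\sigma^2}\ \ll\ \theta^{-1}(\log\mu)\,e^{-\theta^2/2},
\]
so that, multiplied by $\sqrt{\mu}/\sigma$, the discarded tail of the $\gamma$-sum is $O\big(\mu^{3/2}\log\mu/(\theta e^{\theta^2/2})\big)$ (indeed smaller). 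Assembling (a), (b) and the untouched $R(\mu,\alpha)$ gives (\ref{smu}).

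The step I expect to be the main obstacle is the pair of uniform estimates for ratios of Gamma functions whose argument has both real part $\alpha$ and (for the zero-sum) imaginary part $\gamma$ tending to infinity: the local expansion of $w_{\alpha,\lambda}$ near its mode, with an error that is explicit and uniform over the whole window $|j|\le\eta\sigma$, and the uniform bound $|\widetilde{w}_\alpha(\gamma)|\ll\alpha^{-1/2}e^{-\gamma^2/(2\alpha)}$ with the sharp constant in the exponent (so that the cut at $|\gamma|=\mu\theta/\sigma$ produces exactly $e^{-\theta^2/2}$, up to $1+o(1)$). A secondary, largely bookkeeping matter is to verify that the lower-order errors ($1/\alpha$, $\eta^3\sigma/\mu$, and $\sqrt{\mu}\log^2\mu/\sigma$ from Abel summation) together with their dependence on the fixed $\sigma,\eta,\theta$ are dominated by the two error terms in (\ref{smu}); the Riemann hypothesis is used, in essence, only to put every zero in the form $\rho=\tfrac12+i\gamma$, which is what produces the prefactor $\sqrt{\mu}/\sigma$ and the function $\widetilde{w}_\alpha$.
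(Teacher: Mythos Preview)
Your proposal is correct and follows essentially the same route as the paper: start from the exact identity of Theorem~\ref{teo1}, write $\rho=\tfrac12+i\gamma$ under RH to produce $\widetilde{w}_\alpha$, then carry out the two truncations (the paper's Lemmas~\ref{peter} and~\ref{ema}) together with the local Gaussian replacement of $w_{\alpha,\lambda}(\mu+j)$ via Stirling (equation~(\ref{claim}) in the paper). The technical obstacles you flag---uniform Stirling for $w_{\alpha,\lambda}$ near its mode and the bound $|\widetilde{w}_\alpha(\gamma)|\ll\alpha^{-1/2}e^{-\gamma^2/(2\alpha)}$---are precisely the content of Lemmas~\ref{lema2} and~\ref{ema}, and your handling of the multiplicative error $1+O(1/\alpha+\eta^3\sigma/\mu)$ by bounding the full weighted sum (via Abel summation and $\psi(x)-x\ll\sqrt{x}\log^2 x$) is a minor variant of the paper's direct estimate of $E_2$.
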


For the enunciation of theorem~\ref{teo2} we
have assumed the validity of the
Riemann hypothesis and it will
be convenient to assume it for the remainder
of this note without further notice.

When the sum $S(\mu)$ in equation (\ref{ese})
involves only one term (corresponding to $j=0$),
then theorem~\ref{teo2}
can be considered as
a smooth version of the explicit
formula of Landau \cite{Landau},
\[
\Lambda(x)=
-\frac{2\pi}{T}
\sum_{0<\gamma\leq T}x^\rho+R
\kern1cm\hbox{where}\kern1cm
R\ll\frac{\log T}{T}.
\]
Landau's explicit formula has received
due attention by 
number theorists ever since its
publication. In particular,
Gonek~\cite{Gonek2},
obtained a bound, uniform in $x$
and $T$, for the error term,
\[
R\ll\frac{x\log(2Tx)\log\log(3x)}{T}.
\]
Smooth versions of Landau's explicit
formula also exist in the literature. For
example (\cite{Montgomery}, page 410)
\begin{eqnarray}\label{ejercicio}
&&\frac{1}{a\sqrt{2\pi}}
\sum_{n=1}^\infty\Lambda(n)
\exp\Big\{-\frac{1}{2a^2}
\log^2\Big(\frac{x}{n}\Big)\Big\}=
K_1+K_2+K_3+K_4
\end{eqnarray}
where
\begin{gather*}
K_1=e^{\frac{1}{2}a^2}x,\kern1cm
K_2=-\sum_{\rho}
e^{\frac{1}{2}a^2\rho^2}x^\rho,
\kern1cm
K_3=\sum_{0<k<\log(x)/2a^2}
\frac{e^{2a^2k^2}}{x^{2k}},\\
\vbox{\kern.8cm}
K_4=-\frac{1}{2\pi}
\exp\Big\{-\frac{1}{2a^2}\log^2x\Big\}
\inte_{-\infty}^{+\infty}
\frac{\zeta'}{\zeta}\Big(-
\frac{\log x}{a^2}+it\Big)
e^{-\frac{1}{2}a^2t^2}\>dt.
\end{gather*}
Here we have that the
sum on the left hand side of 
equation (\ref{ejercicio}) is a
sum over the von Mangodt function
weighted with a function which in
a neighborhood of $x$ is bell shaped
(this follows by considering the Taylor
series expansion of $\log^2(x)$).
On the other hand, the sum
$K_2$ is a sum over the
zeros of the Riemann zeta function
with a weight function which
is also bell shaped.

Formula (\ref{smu}) is similar
to formula (\ref{ejercicio}) 
because on both sides of the equation
we have bell shaped weight functions.
That this is the case for the right
hand side of equation (\ref{smu})
is because of the fact that,
when $\alpha$ is large, then
$\widetilde{w}_\alpha(\gamma)$ is 
approximately bell
shaped for $|\gamma|\leq\theta\sqrt{\alpha}$.
In fact, in the forthcoming 
lemma~\ref{lema2} we show that
as $\alpha\to\infty$ and 
$\gamma\ll\sqrt{\alpha}$, we have
\begin{eqnarray}\label{niño}
|\widetilde{w}_\alpha(\gamma)|\sim
\frac{1}{\sqrt{\alpha}}
\exp\Big\{-\frac{1}{2\alpha}
\gamma^2\Big\}.
\end{eqnarray}
See \cite{Aryan} for a more recent example
of smooth versions of Landau's explicit
formula with bell shaped weight functions
on both sides of the equation.
Among other authors who have contributed
to the understanding of Landau's explicit
formula are Fujii \cite{Fujii} and
Kaczorowski \cite{Kaczorowski}. It is 
also interesting to note that in his
research on the difference of
$\pi(x)-\mathrm{li}(x)$, Lehman \cite{Lehman}
also worked with a bell shaped
weight function.

\section{Numerical computations}

In this section we expose some
consequences of theorem~\ref{teo2}.
Note that in equations (\ref{ese}) and (\ref{smu}),
the terms $\eta$ and
$\theta$ determine how many ``standard deviations"
are to be taken into account when we numerically
compute the sums with bell shaped weight
functions. \textcolor{\cl}{
However, the term involving 
$\theta$ within Landau's symbol in formula (\ref{smu})
is larger than the term involving $\eta$. Thus,
it is this term involving $\theta$,
the one that will determine
how many addends  are to be taken into
account in the sum over $|\gamma|$ in
formula (\ref{smu})}.

\textcolor{\cl}
{As a consequence of theorem~\ref{teo2},
we have that by a comparison between the
numerical values of $\log\mu$ and $S(\mu)$, as
computed by the right hand side of equation (\ref{smu}), 
we can decide whether a natural
number $\mu$ is a prime number}.

\textcolor{\cl}{
\begin{corollary}\label{cor1}
Let $\sigma=1/2$. Let 
\[
B=\sigma\sqrt{2\pi}
\bigg(1-
\frac{\sqrt{\mu}}{\sigma}
\sum_{|\gamma|\leq\mu\theta/\sigma}
\widetilde{w}_\alpha(\gamma)
\Big(\frac{\sigma^2}{\mu}
\Big)^{i\gamma}
\kern-.1cm -R(\mu,\alpha)\bigg).
\]
There exist a constant
$K$ such that if $\theta\geq K\sqrt{\log\mu}$,
then, for all sufficiently large $\mu$, 
if $B\geq(41/50)\log\mu$ then $\mu$
is a prime number.
\end{corollary}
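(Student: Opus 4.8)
The plan is to deduce the corollary from Theorem~\ref{teo2}, keeping the free parameter $\eta$ equal to a large absolute constant and exploiting the hypothesis $\theta\ge K\sqrt{\log\mu}$ to annihilate the $\theta$-dependent part of the error term. Two preliminary observations. First, $B$ is a real number: under the Riemann hypothesis the non-trivial zeros occur in conjugate pairs $\tfrac12\pm i\gamma$, and since $\alpha$ is real we have $\widetilde w_\alpha(-\gamma)=\overline{\widetilde w_\alpha(\gamma)}$ and $(\sigma^2/\mu)^{-i\gamma}=\overline{(\sigma^2/\mu)^{i\gamma}}$, so the $\gamma$-sum defining $B$ equals its own conjugate; as $R(\mu,\alpha)$ is real by inspection of its defining series, $B\in\mathbb R$ and the inequality $B\ge\tfrac{41}{50}\log\mu$ is meaningful. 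Second, with $\sigma=1/2$ we have $\alpha=(\mu/\sigma)^2=4\mu^2$, the Gaussian weight in (\ref{ese}) becomes $e^{-2j^2}$, and, since $B$ is by definition exactly $\sigma\sqrt{2\pi}$ times the block of main terms on the right of (\ref{smu}), Theorem~\ref{teo2} yields
\[
S(\mu)=B+O\!\left(\frac{\log\mu}{\eta\,e^{\eta^2/2}}+\frac{\mu^{3/2}\log\mu}{\theta\,e^{\theta^2/2}}\right).
\]
Taking $K=2$, so that $\theta\ge 2\sqrt{\log\mu}$ and hence $e^{\theta^2/2}\ge\mu^2$, the second term is $\ll\mu^{-1/2}\log\mu$, in particular at most $\tfrac1{100}\log\mu$ for $\mu$ large; and choosing $\eta$ once and for all so large that $\eta e^{\eta^2/2}$ exceeds $100$ times the implied constant makes the first term at most $\tfrac1{100}\log\mu$. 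Hence $|B-S(\mu)|\le\tfrac1{50}\log\mu$ for all sufficiently large $\mu$.

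Next I would estimate $S(\mu)$ under the assumption that $\mu$ is \emph{not} prime. The $j=0$ term of (\ref{ese}) is exactly $\Lambda(\mu)$, and if $\mu$ is composite then $\Lambda(\mu)\le\tfrac12\log\mu$ (it equals $\tfrac1k\log\mu$ when $\mu=p^k$ with $k\ge2$, and $0$ otherwise). Since $\eta$ is a fixed constant, uniformly for $1\le|j|\le\eta/2$ we have $\mu/(\mu+j)=1+o(1)$ and, by the trivial bound $\Lambda(n)\le\log n$, $\Lambda(\mu+j)\le\log(\mu+\eta/2)=(1+o(1))\log\mu$; therefore
\[
S(\mu)\le\frac12\log\mu+(1+o(1))\log\mu\cdot 2\sum_{j=1}^{\infty}e^{-2j^2}
=\Big(\frac12+2\sum_{j=1}^{\infty}e^{-2j^2}+o(1)\Big)\log\mu .
\]
A direct computation gives $2\sum_{j\ge1}e^{-2j^2}=2(e^{-2}+e^{-8}+e^{-18}+\cdots)<0.272$, so the bracketed constant is $<0.772$, which is safely below $\tfrac{41}{50}=0.82$; combining this with $|B-S(\mu)|\le\tfrac1{50}\log\mu$ gives $B<(0.772+\tfrac1{50}+o(1))\log\mu<\tfrac{41}{50}\log\mu$ for all large $\mu$. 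Thus $\mu$ composite forces $B<\tfrac{41}{50}\log\mu$; contrapositively, $B\ge\tfrac{41}{50}\log\mu$ forces $\mu$ to be prime, which is the corollary with $K=2$ (and in fact any fixed $K>\sqrt3$ works). For completeness: when $\mu$ is prime one has $S(\mu)\ge\Lambda(\mu)=\log\mu$, hence $B\ge\tfrac{49}{50}\log\mu>\tfrac{41}{50}\log\mu$ for large $\mu$, so the inequality on $B$ actually characterises primality; but the corollary claims only one direction.

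The only genuinely delicate point is the control of the off-centre terms $\Lambda(\mu+j)$, $j\ne0$, in $S(\mu)$. These cannot be thrown away: if $\mu$ is an even number lying between two primes then $\Lambda(\mu\pm1)\sim\log\mu$ while $\Lambda(\mu)=0$, so $S(\mu)$ is then about $2e^{-2}\log\mu$. What rescues the estimate is that the super-exponential Gaussian decay $e^{-2j^2}$, together with the crude uniform bound $\Lambda(n)\le\log n$, confines the total off-centre contribution to $(1+o(1))\cdot 2(e^{-2}+e^{-8}+\cdots)\log\mu$, which — added to the at most $\tfrac12\log\mu$ from $\Lambda(\mu)$ — still falls short of $\tfrac{41}{50}\log\mu$ by a fixed positive margin; this margin is what swallows both the error term of Theorem~\ref{teo2} (once $\eta$ is a large constant and $\theta\ge 2\sqrt{\log\mu}$) and the $o(1)$ losses incurred in bounding $\mu/(\mu+j)$ and $\Lambda(\mu+j)$. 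Everything else is routine bookkeeping: checking $B\in\mathbb R$, verifying that $e^{\theta^2/2}$ outgrows $\mu^{3/2}$ when $\theta\ge 2\sqrt{\log\mu}$, and evaluating $2\sum_{j\ge1}e^{-2j^2}$ numerically.
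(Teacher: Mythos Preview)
Your proof is correct and follows essentially the same route as the paper's: fix $\eta$ to be a large absolute constant, use Theorem~\ref{teo2} to identify $S(\mu)$ with $B$ up to an error $\ll(\log\mu)/50$ once $\theta\ge K\sqrt{\log\mu}$, bound the off-centre contribution $\sum_{j\ne0}$ by $(1+o(1))\cdot 2\sum_{j\ge1}e^{-2j^2}\log\mu<\tfrac{7}{25}\log\mu$, and exploit the gap between this and $\tfrac12\log\mu$ to force $\Lambda(\mu)/\log\mu>\tfrac12$. The only cosmetic difference is that the paper phrases the endgame as the direct estimate $|\Lambda(\mu)/\log\mu-B/\log\mu|\le\tfrac{8}{25}$, whereas you argue by contraposition; the arithmetic is identical, and your explicit choice $K=2$ and remark that $B\in\mathbb R$ are welcome additions.
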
}

\textcolor{\cl}{
\begin{proof}
Let $\eta$ and be such
that $O(e^{-\frac{1}{2}\eta^2})<1/50$. 
Notice that
\begin{equation*}
\begin{split}
\frac{1}{\log\mu}
\sum_{1<|j|\leq\eta/2}
\frac{\mu}{\mu+j}\Lambda(\mu+j)
e^{-2j^2}
\leq \frac{\log(\mu+\eta)}{\log\mu}
\frac{\mu}{\mu-\eta}2\sum_{j=1}^\infty
e^{-2j^2}<
\frac{7}{25}
\end{split}
\end{equation*}
whenever $\mu$ is sufficiently large.
Thus, we have
\[
\frac{\Lambda(\mu)}{\log\mu}=
\frac{B}{\log\mu}+E+
O(\mu^{\frac{3}{2}}e^{-\frac{1}{2}\theta^2})
\kern1cm\hbox{with}\kern1cm
|E|< \frac{7}{25}+\frac{1}{50}.
\]
The $O$ term in the above equation 
is smaller that $1/50$ if 
$\theta\geq K\sqrt{\log\mu}$ for
some constant $K$. Therefore we have
\[
\Big|\frac{\Lambda(\mu)}{\log\mu}-
\frac{B}{\log\mu}\Big|\leq\frac{8}{25}.
\]
Hence, if $B/\log \mu>41/50$, then
$\Lambda(\mu)/\log\mu>1/2$ and therefore
$\mu$ is a prime number.
\end{proof}}

\textcolor{\cl}
{It follows
from corollary~\ref{cor1} that an order of
$\mu\log^{\frac{3}{2}}\mu$ zeros of 
$\zeta(s)$ are needed to determine whether a 
natural number is prime}. This last assertion 
is a consequence of the well known fact that the zero
counting function
$N(T)=\mathrm{Card}\{\gamma\in(0,T):
\zeta(\beta+i\gamma)=0\}$ is such that
\begin{eqnarray}\label{ndet}
N(T)\sim\frac{T}{2\pi}
\log\frac{T}{2\pi}
\end{eqnarray}
as $T\to\infty$,
(\cite{Chandrasekharan}, page, 36).
\textcolor{\cl}
{Furthermore, by setting $\sigma=1/\sqrt{2\log2}$,  
one can also show that
in order to determine whether a natural
number $\mu$ is such that $2\mu-1$ and
$2\mu+1$ are both prime numbers, 
the same number of zeros are sufficient as
when determining whether $2\mu$ is prime}.

In table~\ref{table1} we show the values
of quantity $B$ as described in
corollary~\ref{cor1} for distinct prime
numbers $\mu$ and by setting $\eta=3$
and $\theta=K\sqrt{\log\mu}$ for
selected values of the constant
$K$. We see from
these numerical computations that a rather
small value of $K$ is sufficient to
determine whether $\mu$ is a prime number.

\renewcommand{\arraystretch}{1.7}
\begin{table}[ht]
\begin{center}
\begin{tabular}{|c|c|c|c|c|c|}
\hline
$\mu$ & $K=0.5$ & 
$K=1.0$  &$K=1.5$ &$K=2.0$&
$S(\mu)$ 
\\ \hline
12\,553 & 7.83004 & 9.40828 & 9.43766 &
9.43772 & 9.43771\\ \hline
22\,307 & 9.22783 & 10.0031 & 10.0127 &
10.0127 & 10.0127\\ \hline
48\,611 & 9.8514 & 10.7817 & 10.7919 &
10.7919 & 10.7916
\\ \hline
\end{tabular}
\end{center}
\caption{\label{table1}
This table illustrates the performance
of the computation scheme of corollary~\ref{cor1}
for distinct values of the constant $K$.}
\end{table}

Now we might ask whether
a lesser number of zeros are sufficient to
determine when of natural number
is a prime number. 
In order to address this question, 
we recall that the variance 
of a probability distribution is a
measure of how much concentrated is
the probability mass around its mean
value. Another such measure is given
by the dispersion $\d[f]$ of a function
(not necessarily a probability density
function) defined by
\[
\d[f]=
\inte_{-\infty}^{+\infty}
(x-\bar{x})^2\kern.03cm
\frac{|f(x)|^2}{\parallel f
\parallel_2^2}\>dx\
\kern.8cm\hbox{where}\kern.8cm
\bar{x}=\inte_{-\infty}^{+\infty}
x\kern.03cm
\frac{|f(x)|^2}{\parallel f
\parallel_2^2}\>dx
\]
when $f\in\mathrm{L}^2(\mathbb{R})$. 
For such an $f(x)$, we will denote
its Fourier transform by $f^\f(t)=
\int_{-\infty}^{+\infty}
f(x)e^{-2\pi itx}dx$.
Heisenberg's inequality states that
if $\d[f]$ is small, then $\d[f^\f]$
must be large. More exactly, we have that
\begin{eqnarray}\label{heisenberg}
\d[f]
\cdot
\d[f^\f]
\geq\frac{1}{16\pi^2},
\end{eqnarray}
and this inequality holds as
an equality only in case that 
$f(x)=c\kern.03cme^{-kx^2}$ for constants
$k>0$ and $c\in\mathbb{C}$
(see \cite{Igari}, page 188).

In section~\ref{Heisenberg} we will
use Heisemberg's inequality
to prove the following theorem, which is
not negligible because, while 
$\widetilde{w}_\alpha(\gamma)$
is related to the Fourier transform of 
$w_{\alpha,\gamma}(t)$, it is not equal to it.

\begin{thm}\label{tres}
For the dispersion of
$\widetilde{w}_\alpha$ we have,
as $\alpha\to\infty$,
\[
\d[\widetilde{w}_\alpha]\geq
\Big(
\frac{\alpha}{2}-\frac{1}{4}
\Big)
\Big\{1+O\Big(
\frac{1}{\alpha^{
\frac{3}{7}}}\Big)\Big\}.
\]
\end{thm}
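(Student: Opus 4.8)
The plan is to recognize $\widetilde{w}_\alpha$ as a constant multiple of a Fourier transform, and thereby reduce the dispersion to an elementary quotient of $\Gamma$-integrals. Put $a=\alpha-\tfrac12$. Substituting $x=e^{u}$ in Euler's integral gives
\[
\Gamma(a+i\gamma)=\inte_{0}^{\infty}x^{\,a+i\gamma-1}e^{-x}\,dx
=\inte_{-\infty}^{\infty}g_{a}(u)\,e^{i\gamma u}\,du,
\qquad g_{a}(u):=\exp\{au-e^{u}\},
\]
so that $\widetilde{w}_\alpha(\gamma)=\Gamma(a+i\gamma)/\Gamma(\alpha)$ equals $1/\Gamma(\alpha)$ times the Fourier transform (in the variable $u$) of the real function $g_{a}$, which belongs to $\mathrm{L}^{1}\cap\mathrm{L}^{2}(\mathbb{R})$ because $a>0$. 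Since $g_{a}$ is real, $|\widetilde{w}_\alpha(\gamma)|^{2}$ is an even function of $\gamma$, so $\bar\gamma=0$ in the definition of $\d[\widetilde{w}_\alpha]$; and by Stirling $|\widetilde{w}_\alpha(\gamma)|\ll|\gamma|^{\alpha-1}e^{-\pi|\gamma|/2}$, so $|\widetilde{w}_\alpha|^{2}$ and $\gamma^{2}|\widetilde{w}_\alpha|^{2}$ are integrable and $\d[\widetilde{w}_\alpha]$ is finite.

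Next I would invoke Plancherel's identity together with differentiation under the Fourier transform. Since $g_{a}$ vanishes at $\pm\infty$ and $g_{a}'(u)=(a-e^{u})\,g_{a}(u)$ also lies in $\mathrm{L}^{1}\cap\mathrm{L}^{2}(\mathbb{R})$, one has $\widehat{g_{a}'}(\gamma)=-i\gamma\,\widehat{g_{a}}(\gamma)$, and therefore, using $\bar\gamma=0$ and cancelling the Plancherel constant,
\[
\d[\widetilde{w}_\alpha]
=\frac{\inte_{-\infty}^{\infty}\gamma^{2}\,|\widehat{g_{a}}(\gamma)|^{2}\,d\gamma}
      {\inte_{-\infty}^{\infty}|\widehat{g_{a}}(\gamma)|^{2}\,d\gamma}
=\frac{\inte_{-\infty}^{\infty}|g_{a}'(u)|^{2}\,du}
      {\inte_{-\infty}^{\infty}|g_{a}(u)|^{2}\,du}.
\]
Both integrals become $\Gamma$-integrals under $x=e^{u}$: $\int|g_{a}|^{2}\,du=\int_{0}^{\infty}x^{2a-1}e^{-2x}\,dx=\Gamma(2a)/2^{2a}$, and, using $\Gamma(2a+1)=2a\,\Gamma(2a)$ and $\Gamma(2a+2)=(2a+1)(2a)\,\Gamma(2a)$,
\[
\inte_{-\infty}^{\infty}|g_{a}'(u)|^{2}\,du
=\inte_{0}^{\infty}(a-x)^{2}\,x^{2a-1}e^{-2x}\,dx
=a^{2}\,\frac{\Gamma(2a)}{2^{2a}}-2a\,\frac{\Gamma(2a+1)}{2^{2a+1}}+\frac{\Gamma(2a+2)}{2^{2a+2}}
=\frac{a}{2}\cdot\frac{\Gamma(2a)}{2^{2a}}.
\]
Dividing, $\d[\widetilde{w}_\alpha]=a/2=\alpha/2-1/4$ \emph{exactly}, for every $\alpha>\tfrac12$; in particular $\d[\widetilde{w}_\alpha]\ge(\alpha/2-1/4)\{1+O(\alpha^{-3/7})\}$, which is the statement of the theorem, with much room to spare.

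If one prefers to keep Heisenberg's inequality (\ref{heisenberg}) in play, as the phrasing of the paper suggests, the same change of variables shows $\d[\widetilde{w}_\alpha]=4\pi^{2}\,\d[g_{a}^{\f}]$, while $g_{a}(u)^{2}$, once normalized, is the density of $\log X$ with $X$ a gamma random variable of shape $2a=2\alpha-1$; hence $\d[g_{a}]=\v[\log X]=\psi'(2\alpha-1)$, the trigamma function. Heisenberg's inequality then gives $\d[\widetilde{w}_\alpha]\ge 1/(4\,\d[g_{a}])=1/(4\psi'(2\alpha-1))=\alpha/2-3/8+O(1/\alpha)$, which already implies the stated bound. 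The weaker error $O(\alpha^{-3/7})$ is presumably what survives if one foregoes the exact evaluation of $\d[g_{a}]$ (equivalently of the $\Gamma$-integrals above) and instead estimates it from the Gaussian approximation $|\widetilde{w}_\alpha(\gamma)|\sim\alpha^{-1/2}\exp\{-\gamma^{2}/2\alpha\}$ of (\ref{niño}), which holds only for $\gamma\ll\sqrt\alpha$, together with a Stirling bound for the tail $|\gamma|\gg\sqrt\alpha$.

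The clean route has no real analytic obstacle; its only content is the initial observation that $\widetilde{w}_\alpha$ is, up to a constant, the Fourier transform of $g_{a}$ — after that the computation runs on its own, modulo the routine justification of Plancherel, of the identity $\widehat{g_{a}'}=-i\gamma\,\widehat{g_{a}}$, and of the interchange of Fourier inversion with the integrals, all standard once Stirling's asymptotics are available. The approximation route (which seems to be the one behind the theorem's error term) is where the genuine difficulty lies: one must balance the error of the Gaussian approximation of (\ref{niño}) against the contribution of the range $|\gamma|\gg\sqrt\alpha$ where it breaks down, and it is the optimization of that truncation that forces an exponent such as $3/7$ in the error term.
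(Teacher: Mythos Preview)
Your proof is correct, but it takes a genuinely different route from the paper's proof of Theorem~\ref{tres}. The paper first approximates $w_{\alpha,\lambda}(e^{x})e^{x/2}$ by an explicit Gaussian $W(x)$ (Lemmas~\ref{lema1} and~\ref{lemaB}, with error $O(\alpha^{-3/7})$), computes $\d[W]=1/(2\alpha-1)$, and then applies Heisenberg's inequality to $W$ --- where it is an \emph{equality} --- to obtain $\d[\widetilde{w}_\alpha]\{1+O(\alpha^{-3/7})\}=4\pi^{2}\d[W^{\f}]\geq 1/(4\d[W])=\alpha/2-1/4$. You instead recognize $\widetilde{w}_\alpha$ directly as the Fourier transform of $g_{a}(u)=\exp\{au-e^{u}\}$ and compute $\d[\widetilde{w}_\alpha]$ exactly via Plancherel and elementary Gamma integrals; this reproduces the paper's separate lemma $\d[\widetilde{w}_\alpha]=\alpha/2-1/4$ (which the paper pulls from a cosine-transform table) and makes Theorem~\ref{tres} an immediate corollary with no error term at all. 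Your alternative Heisenberg route, applied to $g_{a}$ itself rather than to its Gaussian approximation $W$, gives only $\d[\widetilde{w}_\alpha]\geq 1/(4\psi'(2\alpha-1))=\alpha/2-3/8+O(1/\alpha)$, still sufficient for the theorem but strictly weaker than both the exact value and the paper's bound, precisely because Heisenberg is tight for the Gaussian $W$ but not for $g_{a}$. Your direct computation is cleaner and self-contained; the paper's approach, by contrast, explains the provenance of the exponent $3/7$ (it is the cost of the Gaussian truncation in Lemma~\ref{lema1}) and showcases that Heisenberg alone, once one passes to the Gaussian, already recovers the sharp leading constant.
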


\textcolor{\cl}
{Also in section~\ref{Heisenberg}
we prove that
\begin{eqnarray}\label{suggest}
\d[\widetilde{w}_\alpha]=
\frac{\alpha}{2}-\frac{1}{4}.
\end{eqnarray}
It then 
follows that, asymptotically,
as $\alpha\to\infty$, the dispersion
$\d[\widetilde{w}_\alpha]$ is as small
as possible.
Thus, given a fixed $\mu$ and
a fixed number of terms in the sum for
$S(\mu)$, then 
it is natural to conjecture 
that the sum over $\gamma$
on the right hand side of formula
(\ref{smu}) is essentially as short
as it can be}. 

\begin{figure}
\begin{center}
\includegraphics[width=13.5cm]{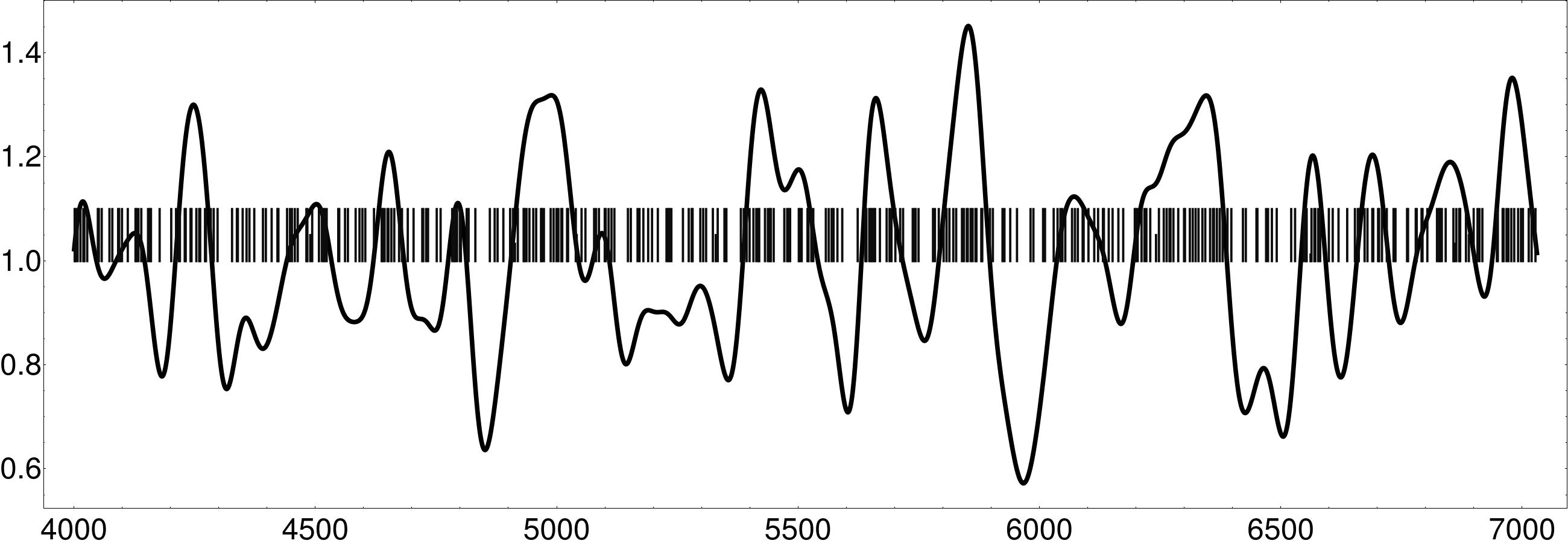}
\end{center}
\caption{The graph of $S(\mu)/\sigma\sqrt{2\pi}$
with $\sigma=25.5$. A
vertical line of height $k^{-1}$
is placed at each number of the
form $p^k$ with $p$ prime and $k\in\mathbb{N}$.}
\label{fig1}
\end{figure}

Besides the cases $\sigma=1/2$ and
$\sigma=1/\sqrt{2\log 2}$ considered
above, other choices
for $\sigma$ are interesting to consider. 
In figure~\ref{fig1}
we show the graph of $S(\mu)/\sigma\sqrt{2\pi}$
for $\mu\in(4000,7030)$ and with $\sigma=25.5$.
For the production of this graph, we used
the right hand side of equation (\ref{smu})
and the list of the Riemann zeta zeros
computed by Odlyzko \cite{Odlyzko}.
Also in figure~\ref{fig1}, a vertical line 
of height proportional to $k^{-1}$
is placed at each number of the
form $p^k$ with $p$ prime and $k\in\mathbb{N}$.
These vertical lines allow us to identify
spots along the real line
where prime numbers are abundant and
spots where prime numbers are relatively scarce. 
It is interesting to note the agreement between 
the graph of $S(\mu)/\sigma\sqrt{2\pi}$ and
the distribution of these vertical lines.
Whenever primes are more abundant than one
would expect on average, then 
$S(\mu)/\sigma\sqrt{2\pi}$ assumes values
greater than 1, which is the leading
term on the right hand side of equation (\ref{smu}).
It follows from these considerations, that
it is interesting to address 
the question of the amplitude and number,
in a given interval,
of the deviations from the leading term 
on the right hand side of formula (\ref{smu}).
We look forward to address these questions
as a further research project.

We finish this section by noticing
that the condition $\alpha\notin\mathbb{N}$
in theorem~\ref{teo1} is included in order
that the first term in the definition
of $R(\mu,\alpha)$ does not have a singular
term. It turns out that the term $R(\mu,\alpha)$
contributes negligibly to the numerical
computation of $S(\mu)/\sigma\sqrt{2\pi}$
and can therefore be ignored without detriment.

\section{Proof of theorem~\ref{teo1}}

In this section we assume, without loss
of generality, that $\alpha-1/2\in\mathbb{N}$.
For $x>0$ as a dummy variable, we let
\begin{eqnarray*}%\label{perla}
h(x)=\sum_{n=1}^\infty
\Lambda(n)\kern.03cm
w_{\alpha,\lambda}(x\kern.01cmn).
\end{eqnarray*}
Let $\hat{h}(s)=\int_0^\infty
h(x)\kern.03cmx^{s-1}dx$ be the Mellin
transform of $h(x)$. Because of the
operational properties of the
Mellin transform, it is easy
to see that $\hat{h}(s)$ is the
product of the Dirichlet series
of $\Lambda(n)$ and the Mellin transform
of $w(x)$, that is to say,
\begin{eqnarray*}%\label{azul}
\hat{h}(s)=-
\frac{\zeta'(s)}{\zeta(s)}
\hat{w}_{\alpha,\lambda}(s)=
-\frac{\zeta'(s)}{\zeta(s)}
\lambda^{s-1}
\frac{\Gamma(\alpha+s-1)}{\Gamma(\alpha)}.
\end{eqnarray*}
From Perron inversion formula, we have
\begin{eqnarray*}%\label{perron}
h(1)=
\lim_{T\to\infty}
\frac{-1}{2\pi i}
\inte_{2-iT}^{2+iT}
\frac{\zeta'(s)}{\zeta(s)}
\frac{\Gamma(\alpha+s-1)}{\Gamma(\alpha)}
\lambda^{s-1}\>ds.
\end{eqnarray*}
Now we recall that there exist a sequence
of numbers $T_j$, with $j\geq2$
such that $j<T_j<j+1$ and
\begin{eqnarray*}%\label{byg}
\Big|\frac{\zeta'(\sigma+iT_j)}
{\zeta(\sigma+iT_j)}
\Big|\ll\log(T_j)^2
\kern.8cm\text{for}\kern.8cm
-1\leq\sigma\leq2
\end{eqnarray*}
(\cite{Ingham}, page 71).
Also, in the region
obtained by removing from the
half plane $\sigma\leq-1$ the
interior of the circles
of radius 1/2 with centers at
$-2j$ with $j\in\mathbb{N}$, we have
\begin{eqnarray*}%\label{aht}
\Big|\frac{\zeta'(x)}{\zeta(s)}
\Big|\ll\log(|s|+1)
\end{eqnarray*}
(\cite{Ingham}, page 73).
Given $\lambda=\mu/\alpha$, let
$k\in\mathbb{N}$ be such that
$k>-(3/\pi)\log\lambda$.
Let $q_j=-2j-1$ with $j\in\mathbb{N}$ and
$L=L_1\cup L_2\cup L_3\cup L_4$
be the contour of integration defined by
\begin{eqnarray*}
L_1&:&\text{the line segment
going from } 2-iT_{kj}
\text{ to } 2+iT_{kj},\\
L_2&:&\text{the line segment
going from } 2+iT_{kj}
\text{ to } -q_j+iT_{kj},\\
L_3&:&\text{the line segment
going from } -q_j+iT_{kj}
\text{ to } -q_j-iT_j,\\
L_4&:&\text{the line segment
going from } -q_j-iT_{kj}
\text{ to } 2-iT_{kj}.
\end{eqnarray*}
From the Cauchy theory of residues
$h(1)$ is equal to
\[
1
-\sum_{|\gamma|\leq T_{kj}}
\frac{\Gamma(\alpha+\rho-1)}
{\Gamma(\alpha)}
\Big(\frac{\mu}{\alpha}\Big)^{\rho-1}+
J_1+J_2+J_3
\]
where
\begin{eqnarray*}
J_1&=&-\sum_{2j\leq T_{kj}}
\frac{\Gamma(\alpha-2j-1)}
{\Gamma(\alpha)}
\Big(\frac{\mu}{\alpha}\Big)^{
-2j-1}=-
\sum_{2j\leq q_j}
\prod_{k=1}^{2j+1}
\frac{\alpha/\mu}{\alpha-k},\\
\vbox{\kern.9cm}
J_2&=&
\frac{(\alpha/\mu)^\alpha}{\Gamma(\alpha)}
\sum_{j=0}^\infty\frac{(-1)^j}{j!}
\Big(\frac{\alpha}{\mu}\Big)^j
\frac{\zeta'(1-\alpha-j)}
{\zeta(1-\alpha-j)},\\
\vbox{\kern.9cm}
J_3 &=&
\sum_{j=2}^4
\frac{1}{2\pi i}
\frac{-1}{\lambda\kern.03cm\Gamma(\alpha)}
\int_{L_j}
\frac{\zeta'(s)}{\zeta(s)}
\Gamma(\alpha+s-1)\lambda^s\>ds.
\end{eqnarray*}
The integrals over $L_2$ and
$L_4$ are bounded by
\[
\frac{\log^2(T_{kj})}{\lambda\kern.03cm\Gamma(\alpha)}
e^{-\pi T_{kj}}
\inte_{-2j-1}^2
\lambda^\sigma\>d\sigma
\ll
\frac{\log^2(kj)}{\lambda\kern.03cm\Gamma(\alpha)}
e^{-\pi kj}
\Big(\frac{1}{\lambda}\Big)^{2j+1}
\to0
\]
as $j\to\infty$ because
$k>(3/\pi)\log(1/\lambda)$.
For the estimation of the 
integral over $L_3$ it is bounded by
a constant times
\begin{eqnarray*}
&&\frac{\lambda^{q_j-1}}{\Gamma(\alpha)}
\inte_{-T_{kj}}^{+T_{kj}}
\log^2(e+|t|)
|\Gamma(\alpha-q_j-1+it)|
\>dt\\
&\ll&
\frac{\lambda^{q_j-1}}{\Gamma(\alpha)}
\Gamma(\alpha+q_j-1)
\inte_{-\infty}^{+\infty}
\log^2(e+|t|)e^{-\pi|t|}\>dt\\
\vbox{\kern.6cm}
&\ll&
\frac{1}{\lambda\kern.03cm\Gamma(\alpha)}
\Big(\frac{1}{\lambda}\Big)^
{2j+\frac{1}{2}}
\frac{1}{\Gamma(2-\alpha+2j+5/2)}.
\end{eqnarray*}
If $\alpha$ and $\lambda$
are fixed, then the last term
tends to 0 as $j\to\infty$.
This finishes the proof of
theorem~\ref{teo1}.

\section{Proof of theorem~\ref{teo2}}

\begin{lemma}\label{peter}
Let $\eta$ be a positive real
number such that
$\eta\leq 4\sqrt{\alpha}/5$. 
Then we have, as $\alpha\to\infty$,
\[
\sum_{n=1}^\infty
\Lambda(n)
w_{\alpha,\lambda}(n)=\kern-.2cm
\sum_{\sqrt{\alpha}
|n-\mu|\leq\eta\mu}\kern-.2cm
\Lambda(n)
w_{\alpha,\lambda}(n)
+O\Big(
\log(\mu)\kern.04cm 
\frac{e^{-\frac{1}{2}\eta^2}}{\eta}\Big).
\]
\end{lemma}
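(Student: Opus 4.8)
The plan is to bound the "tail" contribution $\sum_{\sqrt{\alpha}|n-\mu|>\eta\mu}\Lambda(n)\,w_{\alpha,\lambda}(n)$ directly, using the crude estimate $\Lambda(n)\leq\log n$ together with pointwise control of the gamma density $w_{\alpha,\lambda}(t)$ away from its mean. Recall that $w_{\alpha,\lambda}$ is the density of a gamma random variable with mean $\mu=\alpha\lambda$ and variance $\sigma^2=\alpha\lambda^2$, so that the condition $\sqrt{\alpha}|n-\mu|>\eta\mu$ is exactly $|n-\mu|>\eta\sigma$, i.e.\ we are discarding everything beyond $\eta$ standard deviations. Writing $\log w_{\alpha,\lambda}(t)=(\alpha-1)\log t-\alpha\log\lambda-\log\Gamma(\alpha)-t/\lambda$, the function $\psi(t):=(\alpha-1)\log t - t/\lambda$ has its maximum at $t=(\alpha-1)\lambda$, very near $\mu$, and a second-order Taylor expansion at the mean gives $\psi(\mu)-\psi(t)\gg (t-\mu)^2/\sigma^2$ uniformly for $t$ in a fixed multiple of $\sigma$ around $\mu$, with the constant becoming $\tfrac12$ in the limit $\alpha\to\infty$; this is where the hypothesis $\eta\leq 4\sqrt{\alpha}/5$ enters, keeping us inside the range where the quadratic approximation dominates.

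The key steps, in order, are: (i) use Stirling to show $w_{\alpha,\lambda}(\mu)\asymp 1/(\sigma\sqrt{2\pi})$, so that the full sum is $O(1)$ and the main term on the right is the one displayed; (ii) for $n$ with $\mu+\eta\sigma\leq n$, bound $w_{\alpha,\lambda}(n)\leq w_{\alpha,\lambda}(\mu)\exp\{-c(n-\mu)^2/\sigma^2\}$ for $n$ up to, say, $2\mu$, and use geometric decay of $w_{\alpha,\lambda}(n)$ for $n$ beyond that (since $\psi$ is eventually decreasing at a linear rate $\sim 1/\lambda$); (iii) symmetrically handle the left tail $1\leq n\leq\mu-\eta\sigma$, which is a finite sum and even easier since $w_{\alpha,\lambda}$ is increasing on $(0,(\alpha-1)\lambda)$ and one can crudely bound each term by $w_{\alpha,\lambda}(\mu-\eta\sigma)$; (iv) combine: the right tail contributes
\[
\ll \log(2\mu)\,\frac{1}{\sigma\sqrt{2\pi}}\sum_{m\geq\eta\sigma}e^{-cm^2/\sigma^2}
\ll \log\mu\cdot\frac{e^{-c\eta^2}}{\eta},
\]
by comparing the sum to an integral (the factor $1/\eta$ comes from $\int_{\eta}^\infty e^{-cu^2}du\ll e^{-c\eta^2}/\eta$ after rescaling $m=\sigma u$, and the $1/\sigma$ cancels against the $\sigma$ from the spacing). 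Taking $c\to\tfrac12$ as $\alpha\to\infty$ yields the stated error term $O\!\big(\log\mu\cdot e^{-\eta^2/2}/\eta\big)$.

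The main obstacle is step (ii): making the quadratic lower bound $\psi(\mu)-\psi(t)\geq(\tfrac12-o(1))(t-\mu)^2/\sigma^2$ genuinely uniform over the whole range $\eta\sigma\leq t-\mu\leq$ (a constant times $\alpha\lambda$), rather than only in an infinitesimal neighborhood of the mean, and then patching it to honest exponential decay further out without losing the clean constant $\tfrac12$ in the exponent. This requires examining the sign and size of $\psi''$ and $\psi'''$ carefully — $\psi''(t)=-(\alpha-1)/t^2$ and $\psi'''(t)=2(\alpha-1)/t^3$, so the cubic error term in the Taylor expansion is controlled by $|t-\mu|^3(\alpha-1)/\mu^3\ll|t-\mu|^3/(\alpha\lambda^3)=(|t-\mu|/\sigma)^3/\sqrt{\alpha}$, which is $o((t-\mu)^2/\sigma^2)$ precisely when $|t-\mu|/\sigma=o(\sqrt{\alpha})$, consistent with $\eta\leq 4\sqrt{\alpha}/5$. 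Everything else — Stirling, the geometric-tail estimate, the sum-to-integral comparison — is routine.
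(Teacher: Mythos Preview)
Your overall plan coincides with the paper's proof: bound $\Lambda(n)\le\log n$, pass to an integral, apply Stirling together with the expansion $\log(1+u)-u=-\tfrac12 u^2+O(u^3)$ to obtain Gaussian decay of $w_{\alpha,\lambda}$ within a fixed multiple of $\mu$ from the mean, and treat the far tails separately via honest exponential decay. The paper splits the right tail at $9\mu/5$ and the left tail at $\mu/e^2$; your split at $2\mu$ is cosmetically different but equivalent, and your diagnosis of ``the main obstacle'' (uniformity of the quadratic approximation, controlled by the cubic term) is exactly the point the paper handles implicitly via the inequality $\log(1+t)-t\le -c\,t^2$ on a bounded $t$-interval.

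One step fails as written. In (iii) you propose to bound every left-tail term by the single value $w_{\alpha,\lambda}(\mu-\eta\sigma)$, invoking monotonicity. That yields only
\[
\sum_{1\le n\le \mu-\eta\sigma}\Lambda(n)\,w_{\alpha,\lambda}(n)
\;\ll\;\mu\,\log\mu\cdot w_{\alpha,\lambda}(\mu-\eta\sigma)
\;\asymp\;\frac{\mu}{\sigma}\,\log\mu\cdot e^{-\eta^2/2}
\;=\;\sqrt{\alpha}\,\log\mu\cdot e^{-\eta^2/2},
\]
which is too large by a factor $\sqrt{\alpha}\,\eta$. You must treat the left tail exactly as you treat the right in (ii): retain the pointwise Gaussian estimate $w_{\alpha,\lambda}(\mu-m)\ll\sigma^{-1}e^{-c\,m^2/\sigma^2}$ for $m$ up to a fixed fraction of $\mu$, sum (or integrate) it to recover $\log\mu\cdot e^{-\eta^2/2}/\eta$, and then dispose of the far-left range by a separate crude bound (the paper, after rescaling $n=\mu t$, uses $t^{\alpha-1}\le e^{-2(\alpha-1)}$ on $0<t\le e^{-2}$). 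The word ``symmetrically'' at the start of your step (iii) is the correct instruction; the monotonicity shortcut that follows it does not deliver the stated error term.
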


\begin{proof}
Let us write
\[
E_{1}=\kern-.3cm
\sum_{\sqrt{\alpha}
(n-\mu)>\eta\mu}\kern-.3cm
\Lambda(n)
w_{\alpha,\lambda}(n)
\kern.7cm\text{and}\kern.7cm
E_{2}=\kern-.3cm
\sum_{\sqrt{\alpha}
(n-\mu)<-\eta\mu}\kern-.3cm
\Lambda(n)
w_{\alpha,\lambda}(n).
\]
Let $u=\mu+
\eta\mu/\sqrt{\alpha}$. Then
\[
E_1\ll
\inte_u^\infty
\log(t)
\frac{t^{\alpha-1}}
{\lambda^\alpha\Gamma(\alpha)}
\exp\Big\{-\frac{t}{\lambda}\Big\}\>
dt =J_1+J_2
\]
where $J_1$ is the above integral
from $u$ to $9\mu/5$ and $J_2$ is
the integral from $9\mu/5$ to $\infty$.
For $J_1$ we have
\begin{equation*}
\begin{split}
J_1 \ll&\
\Big(\frac{\mu}{\lambda}\Big)^\alpha
\sqrt{\alpha}
\Big(\frac{e}{\alpha}\Big)^\alpha
\log(\mu)\inte_{1+\eta/\sqrt{\alpha}}^
{\frac{9}{5}}
t^{\alpha-1}e^{-\alpha t}\>dt\\
\vbox{\kern1.1cm}
\leq&\
\sqrt{\alpha}\kern.04cm e^\alpha
\log(\mu) \inte_{1+\eta/\sqrt{\alpha}}^
{\frac{9}{5}}t^{\alpha}e^{-\alpha t}\>dt=
\sqrt{\alpha}\kern.04cm 
\log(\mu)
\inte_{\eta/\sqrt{\alpha}}^
{\frac{4}{5}}
e^{\alpha(\log(1+t)-t)}\,dt
\\
\vbox{\kern1.1cm}
\ll&\
\sqrt{\alpha}\kern.04cm 
\log(\mu)
\inte_{\eta/\sqrt{\alpha}}^
{\infty}
e^{-\alpha\frac{1}{2}t^2}\,dt
\ll
\log(\mu)\frac{e^{-\frac{1}{2}\eta^2}}{\eta}.
\end{split}
\end{equation*}
On the other hand,
\begin{equation*}
\begin{split}
J_2 =&\
\frac{\alpha^\alpha}
{\Gamma(\alpha)}
\inte_{\frac{9}{5}(1+
\frac{\eta}{\sqrt{\alpha}})}^\infty
\log(\mu t)\kern.04cm
t^{\alpha-1}e^{-\alpha t}\,dt \ll
\sqrt{\alpha}\kern.04cm
e^\alpha\log(\mu)
\inte_{\frac{9}{5}(1+
\frac{\eta}{\sqrt{\alpha}})}^\infty
t^\alpha e^{-\alpha t}\,dt\\
\vbox{\kern1.1cm}
=&\
\sqrt{\alpha}\kern.04cm
\log(\mu)
\inte_{\frac{4}{5}+
\frac{9\eta}{5\sqrt{\alpha}}}^\infty
(1+t)^\alpha e^{-\alpha t}\,dt\ll
\sqrt{\alpha}\kern.04cm
\log(\mu)
\inte_1^\infty e^{-\frac{\alpha}{2}t}\,dt
\ll
\frac{\log(\mu)}{\sqrt{\alpha}}
e^{-\frac{1}{2}\alpha}.
\end{split}
\end{equation*}
For the estimation of 
$E_2$, we follow
the same steps as for the
estimation of $E_1$.
Let $\ell=\mu-\eta\mu/\sqrt{\alpha}$.
Then
\[
E_2\ll\log(\mu)
\inte_0^\ell
\frac{t^{\alpha-1}}{\lambda^\alpha
\Gamma(\alpha)}
\exp\Big\{-\frac{t}{\lambda}
\Big\}\>dt
\ll\log(\mu)
\sqrt{\alpha}\kern.04cm 
e^\alpha
\inte_0^{1-\eta/\sqrt{\alpha}}
t^{\alpha-1} e^{-(\alpha-1)t}\,dt.
\]
We split this last integral
in two parts: from 0 to $1/e^2$
and from $1/e^2$ to $1-\eta/\sqrt{\alpha}$.
For the first integral, we have
\[
\log(\mu)
\sqrt{\alpha}\kern.04cm 
e^\alpha
\inte_0^{1/e^2}
t^{\alpha-1} e^{-(\alpha-1) t}\,dt
\ll
\log(\mu)
\frac{\sqrt{\alpha}\kern.04cm 
e^\alpha}{e^{2\alpha}}
\inte_0^\infty
e^{-(\alpha-1) t}\,dt
\ll
\log(\mu) 
\frac{e^{-\alpha}}{\sqrt{\alpha}}.
\]
For the second integral we have,
\begin{equation*}
\begin{split}
\log(\mu)
\sqrt{\alpha}\kern.04cm 
e^\alpha
\inte_{1/e^2}^{1-\eta/\sqrt{\alpha}}
t^{\alpha-1} e^{-(\alpha-1) t}\,dt
\ll&\ 
\log(\mu)
\inte_{-\infty}^{-\eta}
e^{-\frac{1}{2}t^2}\>dt\ll 
\log(\mu)\kern.03cm
\frac{e^{-\frac{1}{2}
\eta^2}}{\eta}.
\end{split}
\end{equation*}
This finishes the proof of
the lemma.
\end{proof}

\begin{lemma}\label{ema}
\textcolor{\cl}
{Let $\theta\geq1$. Then we have,
\[
\sum_\rho
\widetilde{w}_\alpha(\gamma)
\Big(\frac{\mu}{\alpha}
\Big)^{-\frac{1}{2}+i\gamma}=
\sum_{|\gamma|\leq\theta\sqrt{\alpha}}
\widetilde{w}_\alpha(\gamma)
\Big(\frac{\mu}{\alpha}
\Big)^{-\frac{1}{2}+i\gamma}
+O\big(\alpha^{\frac{3}{4}}e^{-\frac{\theta^2}{2}}
\log\alpha
\big).
\]}
\end{lemma}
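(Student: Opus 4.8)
The plan is to estimate, in absolute value, the tail $\sum_{|\gamma|>\theta\sqrt\alpha}\widetilde w_\alpha(\gamma)(\mu/\alpha)^{-1/2+i\gamma}$ and show it is $O(\alpha^{1/4}e^{-\theta^2/2}\log\alpha)$, which is even a little stronger than the asserted $O(\alpha^{3/4}e^{-\theta^2/2}\log\alpha)$. Under the Riemann hypothesis the zeros are $\rho=\tfrac12+i\gamma$ with $\gamma$ real and symmetric about $0$, so each term has modulus $(\mu/\alpha)^{-1/2}|\widetilde w_\alpha(\gamma)|$; and since $\alpha=(\mu/\sigma)^2$ gives $(\mu/\alpha)^{-1/2}=\sqrt\mu/\sigma=\alpha^{1/4}/\sqrt\sigma\ll\alpha^{1/4}$, it will suffice to prove
\[
\sum_{\gamma>\theta\sqrt\alpha}|\widetilde w_\alpha(\gamma)|\ \ll\ (\log\alpha)\,e^{-\theta^2/2}.
\]
Throughout I take $\alpha$ large and $\theta\ge1$; only $\theta=o(\alpha^{1/4})$ is relevant, which covers every value of $\theta$ used later.

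First I would establish a uniform pointwise bound for $|\widetilde w_\alpha(\gamma)|$. Writing $a=\alpha-\tfrac12$ and using the Weierstrass product, $|\widetilde w_\alpha(\gamma)|=\frac{\Gamma(a)}{\Gamma(\alpha)}\prod_{n\ge0}(1+\gamma^2/(a+n)^2)^{-1/2}$, with $\Gamma(a)/\Gamma(\alpha)=\alpha^{-1/2}(1+O(1/\alpha))$ by Stirling. Comparing $\sum_{n\ge0}\log(1+\gamma^2/(a+n)^2)$ with $\int_a^\infty\log(1+\gamma^2/u^2)\,du$ (the integrand decreasing in $u$) and evaluating that integral in closed form, I expect to obtain, for $1\le|\gamma|\le\alpha^{2/3}$,
\[
\sum_{n\ge0}\log\Big(1+\frac{\gamma^2}{(a+n)^2}\Big)\ \ge\ \frac{\gamma^2}{\alpha}-O\Big(\frac{\gamma^4}{\alpha^3}\Big),
\]
where $\gamma^4/\alpha^3\le\alpha^{-1/3}=o(1)$, hence $|\widetilde w_\alpha(\gamma)|\ll\alpha^{-1/2}e^{-\gamma^2/(2\alpha)}$ uniformly there --- a uniform form of (\ref{niño}). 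For $|\gamma|>\alpha^{2/3}$ the product has a positive proportion of factors bounded away from $1$, which gives the cruder bounds $|\widetilde w_\alpha(\gamma)|\ll\alpha^{-1/2}e^{-c\gamma^2/\alpha}$ for $\alpha^{2/3}\le|\gamma|\le 2\alpha$ and $|\widetilde w_\alpha(\gamma)|\ll e^{-c|\gamma|}$ for $|\gamma|>2\alpha$, with $c>0$ absolute; these show the contribution of $|\gamma|>\alpha^{2/3}$ is $O(e^{-c\alpha^{1/3}})$, and also that the full series $\sum_\rho$ converges absolutely, so the truncation is meaningful.

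Next I would pass from the sum over zeros to an integral. Since $|\Gamma(a+i\gamma)|$ is decreasing in $|\gamma|$ (clear from the product), $|\widetilde w_\alpha|$ is even and decreasing on $[0,\infty)$; and by the Riemann--von Mangoldt formula (the refinement of (\ref{ndet})) each unit interval $[T,T+1]$ contains $O(\log(T+2))$ ordinates. Splitting $[\theta\sqrt\alpha,\infty)$ into unit intervals and bounding $|\widetilde w_\alpha|$ on each by its value at the left endpoint gives $\sum_{\gamma>\theta\sqrt\alpha}|\widetilde w_\alpha(\gamma)|\ll\int_{\theta\sqrt\alpha-1}^\infty|\widetilde w_\alpha(t)|\log(t+3)\,dt$. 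On $[\theta\sqrt\alpha-1,\alpha^{2/3}]$ I would insert the Gaussian bound and substitute $t=\sqrt\alpha\,u$; using $\log(\sqrt\alpha\,u+3)\ll\log\alpha+\log u$, the Mills-ratio estimate $\int_{\theta'}^\infty e^{-u^2/2}\,du\le e^{-\theta'^2/2}/\theta'$ and $\int_{\theta'}^\infty u\,e^{-u^2/2}\,du=e^{-\theta'^2/2}$ with $\theta'=\theta-1/\sqrt\alpha$ (and $\log u\le u$), this piece comes out $\ll(\log\alpha)e^{-\theta^2/2}$; the range $t>\alpha^{2/3}$ contributes $O(e^{-c\alpha^{1/3}})$ by the crude bounds. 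Multiplying back by $(\mu/\alpha)^{-1/2}\ll\alpha^{1/4}$ then yields the claimed error term.

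The hard part will be the uniform pointwise bound of the first step: one needs the Gaussian decay with the \emph{sharp} constant $\tfrac12$ in the exponent, valid uniformly over a $\gamma$-range wide enough to contain $\theta\sqrt\alpha$. That forces careful bookkeeping of the second-order terms in the Euler--Maclaurin-type comparison of $\sum_n\log(1+\gamma^2/(a+n)^2)$ with its integral and in the shift from $\Gamma(\alpha-\tfrac12)$ to $\Gamma(\alpha)$, together with a check that these corrections stay $o(1)$ --- which is precisely why the Gaussian range has to be cut off around $\alpha^{2/3}$, beyond which those corrections would overwhelm $\gamma^2/\alpha$ and only the crude, super-exponentially small estimates survive (and are all that is needed). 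Once that bound is in hand, the monotonicity of $|\widetilde w_\alpha|$, the zero-density input, and the Mills-ratio integration are routine.
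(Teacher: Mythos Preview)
Your proposal is correct and follows the same overall strategy as the paper: obtain a Gaussian-type pointwise bound for $|\widetilde w_\alpha(\gamma)|$ in a main range, a crude bound in the far tail, then feed these into the zero-counting function and estimate the resulting Gaussian integral via the Mills ratio. The differences are in the execution. The paper gets the pointwise bound directly from the Euler--Maclaurin/Stirling expansion of $\log|\Gamma(\alpha+it)|$, splits at $t=\alpha$, and writes the tail as a Stieltjes integral $\int\cdot\,dN(t)$; you instead use the Weierstrass product identity $|\Gamma(a+i\gamma)|^2=\Gamma(a)^2\prod_{k\ge0}(1+\gamma^2/(a+k)^2)^{-1}$, compare the logarithmic sum with $\int_a^\infty\log(1+\gamma^2/u^2)\,du$, split at $\alpha^{2/3}$ and $2\alpha$, and exploit the monotonicity of $|\widetilde w_\alpha|$ (which is immediate from the product) together with the unit-interval zero count. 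Your route has two small payoffs: monotonicity comes for free, and because you keep the prefactor $\Gamma(\alpha-\tfrac12)/\Gamma(\alpha)\asymp\alpha^{-1/2}$ in the pointwise bound (the paper drops it in passing from $|\alpha+it|^{\alpha-1}$ to $|1+it/\alpha|^\alpha$), you end up with $\alpha^{1/4}$ rather than $\alpha^{3/4}$ in the error. One caveat: your cutoff at $\alpha^{2/3}$ means the crude tail contributes $O(e^{-c\alpha^{1/3}})$, which is absorbed by the stated error only when $\theta^2\ll\alpha^{1/3}$, a narrower implicit range for $\theta$ than the paper's split at $t=\alpha$ affords (roughly $\theta\ll\sqrt\alpha$); but both easily cover the bounded or logarithmically growing $\theta$ that is actually used.
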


\textcolor{\cl}
{\begin{proof}
From Euler-Maclaurin sum formula we
have that
$\log|\Gamma(\alpha+it)|$ equals
\[
\Big(\alpha-\frac{1}{2}\Big)\frac{1}{2}
\log(\alpha^2+t^2)-\alpha+\frac{1}{2}
\log(2\pi)-|t|\arctan\frac{|t|}{\alpha}
+O\Big(\frac{1}{|t|}\arctan\frac{|t|}{\alpha}
\Big)
\]
(see \cite{Andrews}, page 21).
Hence, for $\sqrt{\alpha}\leq t<\alpha$, we have
\begin{equation*}
\begin{split}
|\widetilde{w}_\alpha(t)|
&\ll
\sqrt{\alpha}\Big(\frac{e}{\alpha}\Big)^\alpha
|\alpha+it|^{\alpha-1}\exp\Big\{
-\alpha-|t|\arctan\frac{|t|}{\alpha}
\Big\}\\
\vbox{\kern.7cm}
&\ll
\Big|1+i\frac{t}{\alpha}\Big|^\alpha
\exp\Big\{-|t|\arctan\frac{|t|}{\alpha}\Big\}\\
\vbox{\kern.7cm}
&\ll
\exp\Big\{\frac{\alpha}{2}\log\Big(
1+\Big(\frac{t}{\alpha}\Big)^2\Big)-
|t|\arctan\frac{|t|}{\alpha}\Big\}\\
\vbox{\kern.7cm}
&\ll
\exp\Big\{-\frac{t^2}{2\alpha}\Big\}.
\end{split}
\end{equation*}
For $t>\alpha$, we have\begin{equation*}
\begin{split}
|\widetilde{w}_\alpha(\gamma)|
&\ll\sqrt{\alpha}\Big(\frac{e}{\alpha}\Big)^\alpha
|\alpha+it|^{\alpha-1}
\exp\Big\{-\alpha-|t|\arctan\frac{|t|}{\alpha}
\Big\}\\
\vbox{\kern.7cm}
&\ll
\Big(\frac{1}{\alpha}\Big)^\alpha
|\alpha+it|^{\alpha}
\exp\Big\{-\frac{\pi}{2}|t|\Big\}\\
\vbox{\kern.7cm}
&\ll
\Big(\frac{\sqrt{2}}
{\alpha}\Big)^\alpha
|t|^\alpha
\exp\Big\{-\frac{\pi}{2}|t|\Big\}.
\end{split}
\end{equation*}
Thus, with $N(t)$ is as in (\ref{ndet}),
\begin{gather*}
\sum_{|\gamma|>\theta\sqrt{\alpha}}
\widetilde{w}_\alpha(\gamma)
\Big(\frac{\mu}{\alpha}
\Big)^{-\frac{1}{2}+i\gamma}
\ll J_1+J_2,
\end{gather*}
where, on the one hand,
\begin{equation*}
\begin{split}
J_1\ll
\sqrt{\frac{\alpha}{\mu}}
\inte^\alpha_{\theta\sqrt{\alpha}}
e^{-\frac{t^2}{2\alpha}}\>dN(t)\leq
\sqrt{\frac{\alpha}{\mu}}\log(\alpha)
\inte^\infty_{\theta\sqrt{\alpha}}
e^{-\frac{t^2}{2\alpha}}\>dt=
\frac{\alpha^{\frac{3}{4}}
\log \alpha}
{\theta\kern.03cm 
e^{\frac{1}{2}\theta^2}}.
\end{split}
\end{equation*}
On the other hand,
\begin{equation*}
\begin{split}
J_2 &\ll
\sqrt{\frac{\alpha}{\mu}}
\Big(\frac{\sqrt{2}}
{\alpha}\Big)^\alpha
\inte_\alpha^\infty
t^\alpha\kern.03cm
e^{-\frac{\pi}{2}t}\>dN(t)
\leq
\sqrt{\frac{\alpha}{\mu}}
\Big(\frac{\sqrt{2}}
{\alpha}\Big)^\alpha
\inte_0^\infty
t^{\alpha+2}\kern.03cm 
e^{-\frac{\pi}{2}t}\>dt\\
&\ll
\vbox{\kern.7cm}
\sqrt{\frac{\alpha}{\mu}}
\Big(\frac{2\sqrt{2}}
{\pi\alpha}\Big)^\alpha
\Gamma(\alpha+3)\ll
\alpha^4\Big(\frac{2\sqrt{2}}{e\kern.03cm\pi}
\Big)^\alpha.
\end{split}
\end{equation*}
This finishes the proof of the lemma.
\end{proof}}

With $x=-1/2$, the following lemma
implies that the 
relation~(\ref{niño}) holds true.

\begin{lemma}\label{lema2}
Let $x$ be a fixed real number.
Let $|y|\ll\sqrt{\alpha}$. 
If $\alpha\to\infty$, then
\[
\Big|
\frac{\Gamma(\alpha+x+iy)}
{\Gamma(\alpha)}\Big|=
\alpha^x\exp\Big\{
-\frac{y^2}{2(\alpha+x)}
\Big\}\Big\{1
+O\Big(\frac{1}{\alpha}
\Big)\Big\}.
\]
\end{lemma}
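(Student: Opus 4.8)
The plan is to prove Lemma~\ref{lema2} by an asymptotic analysis of $\log\Gamma(\alpha+x+iy)-\log\Gamma(\alpha)$ using Stirling's formula with explicit error control. First I would invoke the version of Stirling's expansion already quoted in the proof of Lemma~\ref{ema} (from \cite{Andrews}, page 21), namely that
\[
\log\Gamma(z)=\Big(z-\tfrac12\Big)\log z-z+\tfrac12\log(2\pi)+O\Big(\tfrac{1}{|z|}\Big)
\]
uniformly for $|\arg z|\leq\pi-\delta$. Applying this at $z=\alpha+x+iy$ and at $z=\alpha$, and subtracting, the constant terms $\tfrac12\log(2\pi)$ cancel, and since $y\ll\sqrt\alpha$ and $x$ is fixed we have $|\alpha+x+iy|\asymp\alpha$, so both error terms are $O(1/\alpha)$. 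This reduces the problem to showing that the real part of the resulting main term equals $x\log\alpha-\dfrac{y^2}{2(\alpha+x)}+O(1/\alpha)$.

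Next I would write $\alpha+x+iy=(\alpha+x)\big(1+i\tfrac{y}{\alpha+x}\big)$ and expand the logarithm. The main term is
\[
\Big(\alpha+x-\tfrac12+iy\Big)\Big[\log(\alpha+x)+\log\Big(1+\tfrac{iy}{\alpha+x}\Big)\Big]-(\alpha+x+iy)-\Big(\alpha-\tfrac12\Big)\log\alpha+\alpha.
\]
The purely real contribution from the $\log(\alpha+x)$ and $\log\alpha$ pieces, after using $\log(\alpha+x)=\log\alpha+\tfrac{x}{\alpha}+O(1/\alpha^2)$, collapses to $x\log\alpha+O(1/\alpha)$ — here one checks that the stray $\tfrac{x}{\alpha}$ multiplied by $\alpha$ is cancelled by the $-(\alpha+x)+\alpha=-x$ coming from the linear terms, up to $O(1/\alpha)$. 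The remaining piece is the real part of $\big(\alpha+x-\tfrac12+iy\big)\log\big(1+\tfrac{iy}{\alpha+x}\big)$. Writing $u=\tfrac{y}{\alpha+x}$, which is $O(1/\sqrt\alpha)$, and using $\log(1+iu)=iu+\tfrac12 u^2+O(u^3)$ — more precisely $\re\log(1+iu)=\tfrac12\log(1+u^2)=\tfrac12 u^2+O(u^4)$ and $\im\log(1+iu)=\arctan u=u+O(u^3)$ — the real part of the product is
\[
\Big(\alpha+x-\tfrac12\Big)\tfrac12 u^2 - y\,u + \text{(errors)} = \Big(\alpha+x-\tfrac12\Big)\frac{y^2}{2(\alpha+x)^2}-\frac{y^2}{\alpha+x}+\cdots.
\]
The leading behaviour of this is $-\dfrac{y^2}{2(\alpha+x)}$, and I would track the lower-order pieces: the $-\tfrac12$ contributes $O(y^2/\alpha^2)=O(1/\alpha)$, and the cubic/quartic errors in the expansion of the logarithm contribute $O(\alpha\cdot u^3)+O(\alpha\cdot u^4)$ which, since $u\ll\alpha^{-1/2}$, is $O(\alpha^{-1/2})$ — not yet good enough, so here I would need to be slightly more careful and either expand one more term (noting that the $iu$ and $\tfrac12 u^2$ terms interact with the $iy$ factor to produce cancellation at order $u^3\cdot\alpha$) or simply observe that the genuinely problematic term, $\im$ of $iy\cdot iu$ type contributions, actually vanishes by parity, leaving an honest $O(\alpha u^4)=O(1/\alpha)$. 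Exponentiating, $\big|\Gamma(\alpha+x+iy)/\Gamma(\alpha)\big|=\exp\{x\log\alpha-\tfrac{y^2}{2(\alpha+x)}+O(1/\alpha)\}=\alpha^x\exp\{-\tfrac{y^2}{2(\alpha+x)}\}(1+O(1/\alpha))$, as claimed.

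The main obstacle is the bookkeeping of error terms of size $O(1/\alpha)$: naively, the Taylor expansion of $\log(1+iu)$ to quadratic order produces a cubic remainder $O(u^3)$ which, multiplied by the factor $\alpha+x-\tfrac12$ of size $\alpha$, is only $O(\alpha^{-1/2})$, one power short of the claimed $O(\alpha^{-1})$. Resolving this requires noticing that the would-be $O(\alpha^{-1/2})$ term is actually a \emph{real} quantity arising from $\re\big[(\alpha+iy)\cdot(\text{cubic term})\big]$, and that after taking real parts the odd-degree-in-$y$ terms that survive are precisely those that cancel against the $-y u$ contribution; equivalently, one writes everything in terms of $\tfrac12\log(1+u^2)$ and $\arctan u$ exactly and then expands, whereby the real part of the product is $(\alpha+x-\tfrac12)\cdot\tfrac12\log(1+u^2)-y\arctan u$, a manifestly real expression whose expansion in $u$ has next term $O(\alpha u^4)=O(\alpha^{-1})$ after the leading $-\tfrac{y^2}{2(\alpha+x)}$. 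This careful handling of the real part — rather than complex Taylor expansion — is what makes the $O(1/\alpha)$ error honest.
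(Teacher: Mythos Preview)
Your proposal is correct and follows essentially the same approach as the paper: apply Stirling's formula to $\log\Gamma(\alpha+z)-\log\Gamma(\alpha)$, take the real part exactly via the modulus--argument decomposition of the complex logarithm (so that the expression becomes $(\alpha+x-\tfrac12)\cdot\tfrac12\log(1+u^2)-y\arctan u$ plus lower-order pieces), and then Taylor-expand to obtain $-\tfrac{y^2}{2(\alpha+x)}+O(1/\alpha)$. The only cosmetic difference is that the paper factors $\alpha+z=\alpha(1+z/\alpha)$ rather than $(\alpha+x)(1+iu)$, but the bookkeeping and the resolution of the apparent $O(\alpha^{-1/2})$ obstacle by working with the exact real part are the same.
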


\begin{proof}
Let $z=x+iy$ and $A=\Gamma(\alpha+z)/
\Gamma(\alpha)$.
By Stirling's formula
\[
\log\Gamma(\alpha)
=\alpha\log\alpha-
\alpha+\frac{1}{2}
\log\frac{2\pi}{\alpha}+
O\Big(\frac{1}{\alpha}\Big)
\]
we have that
\begin{eqnarray*}
\log A
&=&
\log\Gamma(\alpha+z)-
\log\Gamma(\alpha)\\
&=&
\vbox{\kern.7cm}
\Big(\alpha+z-\frac{1}{2}\Big)\log(\alpha+z)
-(\alpha+z)
-\Big(\alpha-\frac{1}{2}\Big)\log(\alpha)
+\alpha+O\Big(\frac{1}{\alpha}\Big)\\
\vbox{\kern.7cm}
&=&\log\alpha^z+
\Big(\alpha+z-\frac{1}{2}\Big)
\log\Big(1+\frac{z}{\alpha}\Big)-z
+O\Big(\frac{1}{\alpha}\Big).
\end{eqnarray*}
Now we take the real part of
$\log A$,
\begin{gather*}
\re\Big[\log\alpha^z+
\Big(\alpha+z-\frac{1}{2}\Big)
\Big(\log\Big|
1+\frac{z}{\alpha}\Big|+
i\arctan\frac{y}{\alpha+x}\Big)
-z+O\Big(\frac{1}{\alpha}\Big)\Big]\\
=
\log\alpha^x+
\vbox{\kern1cm}
\Big(\alpha+x-\frac{1}{2}\Big)
\log\sqrt{\frac{(\alpha+x)^2+y^2}
{\alpha^2}}-y\arctan\frac{y}{\alpha+x}-x
+O\Big(\frac{1}{\alpha}\Big).
\end{gather*}
If $|u|<1$, then $\arctan(u)=
u+O(|u|^3)$. Hence, if $y<\alpha+x$, then
$\re[\log A]-x\log\alpha$ is equal to
(we write $E=O(1/\alpha)$)
\begin{equation*}
\begin{split}
& \vbox{\kern.8cm}
\Big(\alpha+x-\frac{1}{2}\Big)
\Big[\log\Big(1+\frac{x}{\alpha}\Big)
+\frac{y^2}{2(\alpha+x)^2}+O\Big(
\frac{y^4}{\alpha^4}\Big)\Big]-\frac{y^2}{\alpha+x}-x+E
\\
\vbox{\kern.8cm}
=\ &
\Big(\alpha+x-\frac{1}{2}\Big)
\log\Big(1+\frac{x}{\alpha}\Big)
+\frac{\alpha+x-1/2}{
2(\alpha+x)^2}y^2
-\frac{y^2}{\alpha+x}-x
+O\Big(\frac{y^4}{\alpha^3}\Big)
+E\\
\vbox{\kern.8cm}
=\ &
\Big(\alpha+x-\frac{1}{2}\Big)
\Big[\frac{x}{\alpha}+
O\Big(\frac{1}{\alpha^2}\Big)\Big]
-\frac{y^2}{2(\alpha+x)}
-\frac{y^2}{4(\alpha+x)^2}
-x+O\Big(\frac{y^4}{\alpha^3}\Big)+E\\
\vbox{\kern.8cm}
=\ &
-\frac{y^2}{2(\alpha+x)}
\bigg\{1+\frac{1}{2(\alpha+x)}\bigg\}
+O\Big(\frac{1}{\alpha}\Big)
+O\Big(\frac{y^4}{\alpha^3}\Big).
\end{split}
\end{equation*}
Since $y^4/\alpha^3\ll1/\alpha$,
then this finishes the proof
of the lemma.
\end{proof}

Now we can undertake the
proof of theorem~\ref{teo2}.
Given $\mu$, $\sigma$ and
$\alpha=(\mu/\sigma)^2$ we have
\begin{eqnarray*}%\label{suma}
\sum_{|j|\leq\eta\sigma}
\Lambda(\mu+j)
w_{\alpha,\frac{\mu}{\alpha}}(\mu+j)
=1-
\sqrt{\frac{\alpha}{\mu}}
\sum_{|\gamma|\leq\theta\sqrt{\alpha}}
\kern-.2cm
\widetilde{w}_\alpha(\gamma)
\Big(\frac{\mu}{\alpha}
\Big)^{i\gamma}
-R(\mu\alpha)+E_1
\end{eqnarray*}
where $E_1\ll\log(\mu)e^{-\frac{1}{2}
\eta^2}/\eta+e^{-\theta\sqrt{\alpha}}$. 
Now be claim that, for
$|j|\leq\eta\sigma$, 
\begin{eqnarray}\label{claim}
w_{\alpha,\frac{\mu}{\alpha}}
(\mu+j)=\frac{1}{\mu+j}
\sqrt{\frac{\alpha}{2\pi}}\exp
\Big\{-\frac{\alpha}{2}
\Big(\frac{j}{\mu}\Big)^2\Big\}
\Big\{1+O\Big(
\frac{1}
{\sqrt{\alpha}}\Big)\Big\}.
\end{eqnarray}
Indeed, by Stirling's formula,
\begin{eqnarray*}
w_{\alpha,\frac{\mu}{\alpha}}
(\mu+j) &=&
\frac{(\mu+j)^{\alpha-1}}
{\displaystyle
\Big(\frac{\mu}{\alpha}
\Big)^\alpha\sqrt{\frac{2\pi}
{\alpha}}\Big(\frac{\alpha}{e}
\Big)^\alpha}\exp\Big\{-
\frac{\mu+j}{\mu/\alpha}\Big\}
\Big\{1+O\Big(\frac{1}{\alpha}\Big)\Big\}\\
\vbox{\kern.8cm}
&=&
\frac{1}{\mu+j}
\Big(1+\frac{j}{\mu}\Big)^\alpha
\sqrt{\frac{\alpha}{2\pi}}
\kern.03cm e^\alpha\exp\Big\{
-\alpha\Big(1+\frac{j}{\mu}\Big)\Big\}
\Big\{1+O\Big(\frac{1}{\alpha}\Big)\Big\}\\
\vbox{\kern.8cm}
&=&
\frac{1}{\mu+j}
\sqrt{\frac{\alpha}{2\pi}}\exp\Big\{
-\frac{j}{\mu}\alpha+\alpha
\log\Big(1+\frac{j}{\mu}\Big)\Big\}
\Big\{1+O\Big(\frac{1}{\alpha}\Big)\Big\}.
\end{eqnarray*}
Since $|j|\leq\eta\sigma$, then 
\begin{eqnarray*}
w_{\alpha,\frac{\mu}{\alpha}}
(\mu+j)
&=&
\frac{1}{\mu+j}
\sqrt{\frac{\alpha}{2\pi}}\exp\Big\{
-\frac{\alpha}{2}\Big[
\Big(\frac{j}{\mu}
\Big)^2+O\Big(\frac{|j|}{\mu}
\Big)^3\Big]\Big\}
\Big\{1+O\Big(\frac{1}{\alpha}\Big)\Big\}\\
\vbox{\kern.8cm}
&=&
\frac{1}{\mu+j}
\sqrt{\frac{\alpha}{2\pi}}\exp\Big\{
-\frac{\alpha}{2}\Big(\frac{j}{\mu}
\Big)^2\Big\}\Big\{1+O\Big(
\frac{1}
{\sqrt{\alpha}}\Big)\Big\}.
\end{eqnarray*}
This finishes the proof of (\ref{claim}).
Now we have
\begin{equation*}
\begin{split}
\frac{S(\mu)}{\sigma\sqrt{2\pi}} =&\
\sum_{|j|\leq\eta\sigma}
\frac{\mu}{\mu+j}
\Lambda(\mu+j)
\exp\Big\{-\frac{\alpha}{2}
\Big(\frac{j}{\mu}\Big)^2\Big\}\\
\vbox{\kern.8cm}
=&\
\Big\{1-\sqrt{\frac{\alpha}{\mu}}
\sum_{|\gamma|\leq\theta\sqrt{\alpha}}
\kern-.2cm
\widetilde{w}_\alpha(\gamma)
\Big(\frac{\mu}{\alpha}
\Big)^{i\gamma}
+E_1
\Big\}\Big\{
1+
O\Big(\frac{1}
{\sqrt{\alpha}}\Big)\Big\}\\
\vbox{\kern.8cm}
=&\
1-
\sqrt{\frac{\alpha}{\mu}}
\sum_{|\gamma|\leq\theta\sqrt{\alpha}}
\kern-.2cm
\widetilde{w}_\alpha(\gamma)
\Big(\frac{\mu}{\alpha}
\Big)^{i\gamma}+
E_1+
O\Big(\frac{1}
{\sqrt{\alpha}}\Big)+E_2
\end{split}
\end{equation*}
where
\[
E_2\ll\frac{\log^\frac{3}{2}\alpha}{\sqrt{\mu}}
\sum_{|\gamma|\leq\theta\sqrt{\alpha}}
\kern-.2cm
|\widetilde{w}_\alpha(\gamma)|\ll
\frac{\log^\frac{5}{2}\alpha}{\sqrt{\mu}}
\]
because, from lemma~\ref{lema2}, 
$|\widetilde{w}_\alpha(\gamma)|\ll
1/\sqrt{\alpha}$ and the above sum has
$N(\theta\sqrt{\alpha})$ terms, where
$N(T)$ is as in equation (\ref{ndet}).
With this estimation for $E_2$ we
finish the proof of theorem~\ref{teo2}.

\section{Heisenberg inequality}\label{Heisenberg}

In this section we first prove that 
equation (\ref{suggest})
holds true, and then, 
starting with the following lemma~\ref{lema1}, 
we use Heisenberg's inequality~(\ref{heisenberg})
in order to prove theorem~\ref{tres}.

\begin{lemma}
If $\widetilde{w}_\alpha(\gamma)$
is as in theorem~\ref{teo2}, then
$\d[\widetilde{w}_\alpha]=\alpha/2-1/4$.
\end{lemma}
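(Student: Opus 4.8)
The plan is to use the evenness of $|\widetilde w_\alpha|^2$ together with a single classical gamma integral. Since $\overline{\Gamma(z)}=\Gamma(\bar z)$ and $\alpha$ is real, one has $|\widetilde w_\alpha(\gamma)|^2=\Gamma(\alpha-\tfrac12+i\gamma)\,\Gamma(\alpha-\tfrac12-i\gamma)/\Gamma(\alpha)^2$, which is an even function of $\gamma$; hence $\int\gamma\,|\widetilde w_\alpha(\gamma)|^2\,d\gamma=0$, so the mean $\bar\gamma$ vanishes and the dispersion reduces to the single ratio
\[
\d[\widetilde w_\alpha]=\Big(\int_{-\infty}^{+\infty}\gamma^2\,|\widetilde w_\alpha(\gamma)|^2\,d\gamma\Big)\Big/\Big(\int_{-\infty}^{+\infty}|\widetilde w_\alpha(\gamma)|^2\,d\gamma\Big).
\]
Stirling's formula (as in the proof of lemma~\ref{ema}) gives $|\Gamma(\alpha-\tfrac12+i\gamma)|\ll|\gamma|^{\alpha-1}e^{-\frac{\pi}{2}|\gamma|}$ for $|\gamma|$ large, so both integrals converge absolutely; recall that here $\alpha-\tfrac12\in\mathbb N$, so all the gamma arguments appearing below have positive real part.

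The computation rests on the beta-type identity
\[
\int_{-\infty}^{+\infty}\Gamma(p+i\gamma)\,\Gamma(q-i\gamma)\,d\gamma=\frac{2\pi\,\Gamma(p+q)}{2^{p+q}}\qquad(p,q\ \text{with positive real part}),
\]
which one obtains by evaluating the Mellin inversion of $(1+u)^{-(p+q)}$ at $u=1$. Taking $p=q=\alpha-\tfrac12$ yields the denominator at once: $\parallel\widetilde w_\alpha\parallel_2^2=2\pi\,\Gamma(2\alpha-1)/(2^{2\alpha-1}\Gamma(\alpha)^2)$. For the numerator I would first use $z\Gamma(z)=\Gamma(z+1)$ to write $i\gamma\,\Gamma(\alpha-\tfrac12+i\gamma)=\Gamma(\alpha+\tfrac12+i\gamma)-(\alpha-\tfrac12)\,\Gamma(\alpha-\tfrac12+i\gamma)$ and the conjugate identity for $-i\gamma\,\Gamma(\alpha-\tfrac12-i\gamma)$; multiplying the two and using $\gamma^2=(i\gamma)(-i\gamma)$ expresses $\Gamma(\alpha)^2\,\gamma^2|\widetilde w_\alpha(\gamma)|^2$ as a linear combination of three products $\Gamma(p+i\gamma)\Gamma(q-i\gamma)$, with $(p,q)$ equal to $(\alpha+\tfrac12,\alpha+\tfrac12)$, $(\alpha\pm\tfrac12,\alpha\mp\tfrac12)$, and $(\alpha-\tfrac12,\alpha-\tfrac12)$. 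Integrating term by term by the displayed identity and reducing every gamma value to $\Gamma(2\alpha-1)$ via $\Gamma(2\alpha+1)=2\alpha(2\alpha-1)\Gamma(2\alpha-1)$ and $\Gamma(2\alpha)=(2\alpha-1)\Gamma(2\alpha-1)$, the resulting bracket collapses to $\tfrac{2\alpha-1}{4}\cdot\Gamma(2\alpha-1)/2^{2\alpha-1}$; dividing by the denominator gives $\d[\widetilde w_\alpha]=\tfrac{2\alpha-1}{4}=\tfrac{\alpha}{2}-\tfrac14$.

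An alternative I would keep in reserve avoids the beta integral entirely: since $\widetilde w_\alpha(\gamma)$ is the restriction to the line $\re(s)=\tfrac12$ of the Mellin transform of $w_{\alpha,1}(t)=t^{\alpha-1}e^{-t}/\Gamma(\alpha)$, Mellin--Plancherel on that line turns $\int(\tfrac14+\gamma^2)|\widetilde w_\alpha(\gamma)|^2\,d\gamma$ into $2\pi\int_0^\infty t^2|w_{\alpha,1}'(t)|^2\,dt$ through the operational rule $s\,\hat f(s)=M[-t f'](s)$, and since $t\,w_{\alpha,1}'(t)=(\alpha-1-t)\,w_{\alpha,1}(t)$ this is once more an elementary computation with $\int_0^\infty t^{2\alpha-2+k}e^{-2t}\,dt$ for $k=0,1,2$. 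Either way, the only point requiring care is the bookkeeping: before splitting the integral of the expanded $\gamma^2|\widetilde w_\alpha|^2$ one must check that each of the three pieces is separately absolutely integrable (it is, by the Stirling decay), and one must track the shifts in the gamma arguments correctly. Beyond that I expect no real obstacle — the whole proof is a one-line algebraic simplification once the beta integral is in hand.
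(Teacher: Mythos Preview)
Your argument is correct. The beta-type identity you quote is precisely the Mellin--Barnes integral evaluated at $u=1$, and your recursion $i\gamma\,\Gamma(\alpha-\tfrac12+i\gamma)=\Gamma(\alpha+\tfrac12+i\gamma)-(\alpha-\tfrac12)\Gamma(\alpha-\tfrac12+i\gamma)$ together with its conjugate does reduce the numerator to the three integrals with $(p,q)\in\{(\alpha+\tfrac12,\alpha+\tfrac12),(\alpha\pm\tfrac12,\alpha\mp\tfrac12),(\alpha-\tfrac12,\alpha-\tfrac12)\}$; the bookkeeping gives $\tfrac{2\alpha-1}{4}$ after cancellation, as you claim. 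The absolute integrability of each piece follows from the Stirling bound you cite, so splitting the integral is legitimate.

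The paper's own proof is shorter but relies on the same identity in disguise: it quotes from tables the cosine Fourier transform
\[
\int_{-\infty}^{+\infty}|\Gamma(\alpha+it)|^2\cos(yt)\,dt=\frac{\pi\,\Gamma(2\alpha)}{2^{2\alpha-1}}\cosh^{-2\alpha}\Big(\frac{y}{2}\Big),
\]
whose value at $y=0$ is exactly your beta integral with $p=q=\alpha$, and then obtains the second moment by evaluating the second $y$-derivative at $y=0$ rather than by shifting the gamma arguments. So the two proofs differ only in how the factor $t^2$ is introduced: the paper differentiates a generating identity in the Fourier variable, while you use $z\Gamma(z)=\Gamma(z+1)$ to shift parameters. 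Your route is slightly more self-contained since it does not appeal to a transform table, and your Mellin--Plancherel alternative would give yet a third independent derivation; the paper's route is terser once the table entry is granted.
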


\begin{proof}
From tables of cosine Fourier transforms,
we have
\[
\inte_{-\infty}^{+\infty}
|\Gamma(\alpha+it)|^2
\cos(y\kern.03cmt)\>dt=
\frac{\pi\kern.03cm\Gamma(2\alpha)}
{2^{2\alpha-1}}
\cosh^{-2\alpha}\Big(\frac{y}{2}\Big)
\]
(\cite{Oberhettinger}, page 47).
Therefore,
\[
\inte_{-\infty}^{+\infty}
|\Gamma(\alpha+it)|^2\>dt=
\frac{\pi\kern.03cm\Gamma(2\alpha)}
{2^{2\alpha-1}}
\kern.5cm\hbox{and}\kern.5cm
\inte_{-\infty}^{+\infty}t^2
|\Gamma(\alpha+it)|^2\>dt=
\frac{\alpha\kern.03cm\pi\kern.03cm
\Gamma(2\alpha)}
{2^{2\alpha}}.
\]
The quotient of these two integrals 
is equal to $\alpha/2$. We finish the
proof by writing $\alpha-1/2$ in
place of $\alpha$.
\end{proof}

\begin{lemma}\label{lema1}
Let $a=\lambda(\alpha+\sigma-1)$ and
let $|x-\log a|\leq \alpha^{-
\frac{10}{21}}$.
For $\alpha\to\infty$, we have that
\begin{eqnarray*}%\label{og}
w_{\alpha,\lambda}(
e^x)e^{x\sigma}=W(x)
\Big\{1+O\Big(
\frac{1}{\alpha^{
\frac{3}{7}}}\Big)\Big\}
\end{eqnarray*}
where
\[
W(x)=
\sqrt{\frac{\alpha}{2\pi}}
\kern.05cm(\lambda
\kern.03cm\alpha)^{\sigma-1}
\exp\Big\{-
\frac{\alpha+\sigma-1}{2}
\big(x-\log a\big)^2\Big\}.
\]
Moreover, we have that
\[
\d[W]=\frac{1}{2\alpha-1}.
\]
\end{lemma}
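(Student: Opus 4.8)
The plan is to treat both assertions by elementary asymptotics. Set $\beta:=\alpha+\sigma-1$ and $a=\lambda\beta$. Writing out the definition of $w_{\alpha,\lambda}$ gives
\[
w_{\alpha,\lambda}(e^x)\,e^{x\sigma}=\frac{1}{\lambda^\alpha\Gamma(\alpha)}\exp\{\beta x-e^x/\lambda\},
\]
and the exponent $g(x)=\beta x-e^x/\lambda$ is maximised exactly at $x=\log a$ (indeed $g'(x)=\beta-e^x/\lambda$ vanishes there and $g''<0$). Put $x=\log a+u$, so $e^x/\lambda=\beta e^u$ and $g(x)=\beta\log a+\beta(u-e^u)=\beta(\log a-1)-\tfrac{\beta}{2}u^2-\tfrac{\beta}{6}u^3\{1+O(u)\}$. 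On the window $|u|=|x-\log a|\le\alpha^{-10/21}$ the cubic-and-higher part of the exponent is $O(\beta|u|^3)=O(\alpha^{1-30/21})=O(\alpha^{-3/7})$ — this is precisely why the exponent $10/21$ appears — and since it tends to $0$ we may exponentiate it into a multiplicative factor $1+O(\alpha^{-3/7})$. Hence $\exp\{g(x)\}=a^\beta e^{-\beta}\exp\{-\tfrac{\beta}{2}(x-\log a)^2\}\{1+O(\alpha^{-3/7})\}$.

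Next I would simplify the constant $a^\beta e^{-\beta}/(\lambda^\alpha\Gamma(\alpha))$. Using Stirling in the form $\Gamma(\alpha)=\sqrt{2\pi/\alpha}\,(\alpha/e)^\alpha\{1+O(1/\alpha)\}$, it equals $\sqrt{\alpha/2\pi}\cdot(\lambda\beta)^\beta e^{-\beta}e^\alpha\lambda^{-\alpha}\alpha^{-\alpha}\{1+O(1/\alpha)\}$. Then $(\lambda\beta)^\beta\lambda^{-\alpha}=\lambda^{\sigma-1}\beta^\beta$, $e^{-\beta}e^\alpha=e^{1-\sigma}$, and $\beta^\beta\alpha^{-\alpha}=\alpha^{\sigma-1}(1+(\sigma-1)/\alpha)^{\beta}=\alpha^{\sigma-1}e^{\sigma-1}\{1+O(1/\alpha)\}$; multiplying these, the constant collapses to $(\lambda\alpha)^{\sigma-1}\{1+O(1/\alpha)\}$. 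Since $O(1/\alpha)$ is absorbed into $O(\alpha^{-3/7})$, combining with the previous paragraph yields $w_{\alpha,\lambda}(e^x)e^{x\sigma}=W(x)\{1+O(\alpha^{-3/7})\}$ with $W$ as stated.

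For the dispersion, observe that $W(x)=C\exp\{-\tfrac{\beta}{2}(x-\log a)^2\}$ is a constant multiple of a Gaussian bump, so $|W(x)|^2/\parallel W\parallel_2^2=\sqrt{\beta/\pi}\,\exp\{-\beta(x-\log a)^2\}$ is exactly the density of a normal random variable with mean $\log a$ and variance $1/(2\beta)$. Hence $\bar x=\log a$ and, straight from the definition of $\d$, $\d[W]=1/(2\beta)=1/\big(2(\alpha+\sigma-1)\big)$, which equals $1/(2\alpha-1)$ for the value $\sigma=\tfrac12$ relevant to $\widetilde w_\alpha$; equivalently one evaluates the Gaussian integrals $\int|W|^2$ and $\int(x-\log a)^2|W|^2$ directly and takes their ratio.

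I expect the only genuine difficulty to be the bookkeeping of error terms: one must verify that the discarded Taylor tail is $o(1)$ before pulling it out multiplicatively, that the two sources of error in the constant (Stirling, and the binomial-type factor $(1+(\sigma-1)/\alpha)^\beta$) are truly only $O(1/\alpha)$, and that the window $|x-\log a|\le\alpha^{-10/21}$ is exactly wide enough to make the cubic remainder $O(\alpha^{-3/7})$ — a wider window would spoil the error term, a narrower one is unnecessary. Everything else is routine.
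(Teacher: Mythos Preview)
Your argument is correct and follows essentially the same route as the paper: both expand the exponent $g(x)=(\alpha+\sigma-1)x-e^x/\lambda$ to second order about its maximum $x=\log a$, control the cubic remainder by $O(\alpha\cdot\alpha^{-30/21})=O(\alpha^{-3/7})$, and then collapse the constant via Stirling (the paper uses Taylor's theorem with integral remainder rather than the series, but the bound is the same). Your observation that $\d[W]=1/\big(2(\alpha+\sigma-1)\big)$ only equals $1/(2\alpha-1)$ at $\sigma=\tfrac12$ is in fact sharper than the paper, which simply says the dispersion ``is obtained from a direct calculation'' and silently uses $\sigma=\tfrac12$ from the application to $\widetilde w_\alpha$.
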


\begin{proof}
Notice first that
\begin{eqnarray}\label{pepe}
ax-e^x=
a\log\Big(\frac{a}{e}\Big)
-\frac{a}{2}
\big(x-
\log(a
)\big)^2-R
\end{eqnarray}
with $R=(1/2)\int_{\log a}^x
(x-y)^2e^y\>dy$.
If $\log a\leq
x\leq\log a+\alpha^{-\frac{10}{21}}$,
then there exists a number $\xi$
such that $\log a\leq\xi\leq x$ and
\[
|R|=
\Bigg|\frac{1}{2}(x-\xi)^2\inte_{\log a}^x
e^y\>dy\Bigg|
\ll\frac{a}{\alpha^{
\frac{20}{21}}}
(\exp\{\alpha^{-\frac{10}{21}}\}-1)\ll
\frac{\lambda}{\alpha^{
\frac{3}{7}}}.
\]
A similar bound holds true when
$\log a-\alpha^{-\frac{10}{21}}
\leq x\leq\log a$. Hence,
\[
\exp\Big\{\frac{R}{\lambda}\Big\}=
1+O\Big(
\frac{1}{\alpha^{
\frac{3}{7}}}\Big).
\]
Now,
\begin{eqnarray*}
w_{\alpha,\lambda}(
e^x)e^{x\sigma} 
&=&\frac{e^{x(\alpha+\sigma-1)}}
{\lambda^\alpha\Gamma(\alpha)}
\exp\Big\{-\frac{e^x}{\lambda}
\Big\}\\
&=&
\vbox{\kern.8cm}
\frac{1}{\lambda^\alpha\Gamma(\alpha)}
\exp\Big\{\frac{1}{\lambda}
\big(\lambda(\alpha+\sigma-1)x
-e^x\big)\Big\}.
\end{eqnarray*}
Equation~(\ref{pepe})
implies that $w_{\alpha,\lambda}(
e^x)e^{x\sigma}$ is equal to
\[
\frac{1}{\lambda^\alpha\Gamma(\alpha)}
\Big(\frac{a}{e}
\Big)^{\alpha+\sigma-1}
\exp\Big\{-\frac{\alpha+\sigma-1}
{2}\big(x-\log a
\big)^2-\frac{R}{\lambda}\Big\}.
\]
By Stirling's formula,
\begin{eqnarray*}
&&\frac{1}{\lambda^\alpha\Gamma(\alpha)}
\Big(\frac{\lambda\alpha}{e}
\big(1+\frac{\sigma-1}{\alpha}
\big)\Big)^{\alpha+\sigma-1} \\
\vbox{\kern.9cm}
&=&
\sqrt{\frac{\alpha}{2\pi}}
\Big(\frac{e}{\lambda\alpha}\Big)^\alpha
\Big(\frac{\lambda\alpha}{e}\Big)^{
\alpha+\sigma-1}
\Big(1+\frac{\sigma-1}{\alpha}\Big)^{
\alpha+\sigma-1}\Big\{1+O\Big(
\frac{1}{\alpha}\Big)\Big\}\\
\vbox{\kern.9cm}
&=&
\sqrt{\frac{\alpha}{2\pi}}
\kern.05cm(\lambda\alpha)^{\sigma-1}
\Big\{1+O\Big(
\frac{1}{\alpha}\Big)\Big\}.
\end{eqnarray*}
The expression for $\d[W]$ is
obtained from a direct calculation.
\end{proof}

\begin{lemma}\label{lemaB}
Let $a$ be as in lemma~\ref{lema1}
and let $\sigma\leq1$.
If $x\geq\log a+
\alpha^{-\frac{10}{21}}$, 
then,
\[
w_{\alpha,\lambda}(
e^x)e^{x\sigma}\ll
\sqrt{\alpha}
\exp\Big\{-\frac{\alpha}{2}
(x-\log a)^2-
\frac{\alpha^{\frac{11}{21}}}
{2}
(x-\log a)\Big\}.
\]
If $\log a-\alpha^{-\frac{10}{21}}-1<
x\leq\log a-
\alpha^{-\frac{10}{21}}$, 
then,
\[
w_{\alpha,\lambda}(
e^x)e^{x\sigma}\ll
\sqrt{\alpha}
\exp\Big\{-\frac{\alpha}{4}
(x-\log a)^2-
\frac{\alpha^{\frac{11}{21}}}
{2}
(x-\log a)\Big\}.
\]
If $x\leq\log a-\alpha^{-\frac{10}{21}}
-1$, 
then,
\[
w_{\alpha,\lambda}(
e^x)\kern.04cm e^{x\sigma}\ll
\sqrt{\alpha}\kern.04cme^{\alpha 
(x-\log a+1)}.
\]
\end{lemma}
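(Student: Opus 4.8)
The plan is to reduce all three estimates to a single exact identity for $w_{\alpha,\lambda}(e^x)e^{x\sigma}$ that is already implicit in the proof of Lemma~\ref{lema1}. Substituting the identity~(\ref{pepe}),
\[
ax-e^x=a\log\Big(\frac ae\Big)-\frac a2\big(x-\log a\big)^2-R,\qquad R=\frac12\inte_{\log a}^{x}(x-y)^2e^y\,dy,
\]
into the elementary formula $w_{\alpha,\lambda}(e^x)e^{x\sigma}=\big(\lambda^\alpha\Gamma(\alpha)\big)^{-1}\exp\{\lambda^{-1}(ax-e^x)\}$, valid for all $x>0$, and using $a/\lambda=\alpha+\sigma-1$, one gets with no remainder
\[
w_{\alpha,\lambda}(e^x)e^{x\sigma}=C_\alpha\exp\Big\{-\frac{\alpha+\sigma-1}{2}\big(x-\log a\big)^2-\frac R\lambda\Big\},\qquad C_\alpha=\frac{1}{\lambda^\alpha\Gamma(\alpha)}\Big(\frac ae\Big)^{\alpha+\sigma-1}.
\]
Stirling's formula, exactly as in Lemma~\ref{lema1}, gives $C_\alpha=\sqrt{\alpha/2\pi}\,(\lambda\alpha)^{\sigma-1}\{1+O(1/\alpha)\}$, and the hypothesis $\sigma\le1$ makes $C_\alpha\ll\sqrt\alpha$. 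Writing $t=x-\log a$ and making the substitution $y=\log a+u$ in the integral, one also records the closed forms $R/\lambda=(\alpha+\sigma-1)\big(e^{t}-1-t-\tfrac12t^2\big)$ and, equivalently, $w_{\alpha,\lambda}(e^x)e^{x\sigma}=C_\alpha\exp\{(\alpha+\sigma-1)(1+t-e^{t})\}$. Everything then reduces to elementary one-variable estimates for $g(t)=e^{t}-1-t$.

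For the first range, $t\ge\alpha^{-10/21}$, the term $R/\lambda=(\alpha+\sigma-1)\big(g(t)-\tfrac12t^2\big)$ is nonnegative, and I would bound $g(t)-\tfrac12t^2\ge\tfrac16t^3$ for $0<t\le1$ and $g(t)-\tfrac12t^2\gg e^{t}$ for $t\ge1$; combined with the trivial fact that $\tfrac\alpha2t^2\ge\tfrac12\alpha^{11/21}t$ precisely because $t\ge\alpha^{-10/21}$, this should show $\tfrac{\alpha+\sigma-1}{2}t^2+R/\lambda\ge\tfrac\alpha2t^2+\tfrac12\alpha^{11/21}t-O(1)$, which gives the first bound after absorbing $O(1)$ into $\log C_\alpha$. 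For the second range, $-1-\alpha^{-10/21}<t\le-\alpha^{-10/21}$, where $|t|$ is bounded, one has $g(t)\ge c_1t^2$ uniformly for a fixed $c_1>0$, so from $-\tfrac{\alpha+\sigma-1}{2}t^2-R/\lambda=-(\alpha+\sigma-1)g(t)$ one obtains $w_{\alpha,\lambda}(e^x)e^{x\sigma}\le C_\alpha\exp\{-c_1(\alpha+\sigma-1)t^2\}$, and $t^2\ge\alpha^{-20/21}$ lets this be rewritten in the asserted form $\ll\sqrt\alpha\exp\{-\tfrac\alpha4t^2-\tfrac12\alpha^{11/21}|t|\}$. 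For the third range, $t\le-1-\alpha^{-10/21}$, the closed form and the crude inequality $1+t-e^{t}\le1+t$ give at once $w_{\alpha,\lambda}(e^x)e^{x\sigma}\le C_\alpha\exp\{(\alpha+\sigma-1)(x-\log a+1)\}\ll\sqrt\alpha\exp\{\alpha(x-\log a+1)\}$.

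I expect the delicate step to be the matching at the transition radius $|x-\log a|\asymp\alpha^{-10/21}$, where the first two ranges abut the range handled by Lemma~\ref{lema1}. There the excess of $R$ over the pure Gaussian is only of order $(\alpha+\sigma-1)|x-\log a|^3$, which is small, so the asserted extra decay $\exp\{-\tfrac12\alpha^{11/21}|x-\log a|\}$ cannot be supplied by $R$ alone; instead one must trade part of the Gaussian exponent $\tfrac\alpha2(x-\log a)^2$ for $\tfrac12\alpha^{11/21}|x-\log a|$ by means of $|x-\log a|\ge\alpha^{-10/21}$, and then keep careful track of the discrepancy between the coefficient $\tfrac{\alpha+\sigma-1}{2}$ that actually occurs and the coefficients $\tfrac\alpha2$, $\tfrac\alpha4$, $\alpha$ appearing in the statement, together with the factor $(\lambda\alpha)^{\sigma-1}$ in $C_\alpha$. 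Verifying that the exponents $\tfrac{10}{21}$, $\tfrac{11}{21}$ and the numerical constants are mutually consistent and dovetail with the error $O(\alpha^{-3/7})$ of Lemma~\ref{lema1} is where the bookkeeping is heaviest; the behaviour for large $|x-\log a|$ is easy, being dominated by the super-exponential decay $e^{-(\alpha+\sigma-1)e^{t}}$ on the right and the linear decay $e^{(\alpha+\sigma-1)(1+t)}$ on the left.
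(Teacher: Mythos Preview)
Your closed-form identity $w_{\alpha,\lambda}(e^x)e^{x\sigma}=C_\alpha\exp\{(\alpha+\sigma-1)(1+t-e^t)\}$ with $t=x-\log a$ is correct and is a clean repackaging of what the paper does; the paper's argument is the same computation written out after substituting $x=\log a\pm\alpha^{-10/21}\pm z$ and expanding $e^{z\pm\alpha^{-10/21}}$ to second order in $z$. So the two routes are not really different---yours just makes the underlying function $g(t)=e^t-1-t$ explicit.

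There is, however, a genuine gap in your treatment of the first range. The inequality you assert,
\[
\frac{\alpha+\sigma-1}{2}t^2+\frac{R}{\lambda}\;\ge\;\frac{\alpha}{2}t^2+\frac{\alpha^{11/21}}{2}t-O(1),
\]
fails at the boundary $t=\alpha^{-10/21}$: there the left side is $(\alpha+\sigma-1)g(t)\sim\tfrac12\alpha^{1/21}$, while the right side is $\tfrac12\alpha^{1/21}+\tfrac12\alpha^{1/21}-O(1)\sim\alpha^{1/21}$. The cubic remainder $R/\lambda\sim\tfrac{\alpha}{6}t^3=O(\alpha^{-9/21})$ is far too small to supply the extra $\tfrac12\alpha^{1/21}$, and the ``trading'' device $\tfrac{\alpha}{2}t^2\ge\tfrac12\alpha^{11/21}t$ cannot manufacture it either, since you would then need $(\alpha+\sigma-1)g(t)\ge\alpha t^2-O(1)$, which is equally false for small $t$. (In the second range you replaced $(x-\log a)$ by $|t|$; note that $t<0$ there, so the linear term in the \emph{stated} bound is a loss, not a gain, and your stronger $|t|$-version again breaks at the boundary by the same count.)

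This is not a defect of your method but of the target: the lemma as stated, with $-\tfrac{\alpha^{11/21}}{2}(x-\log a)$ in the first bound, is too strong at $t=\alpha^{-10/21}$. The paper's own proof does \emph{not} establish it either. Working in the shifted variable $z=t-\alpha^{-10/21}$, the paper obtains $\ll\sqrt{\alpha}\exp\{-\tfrac{\alpha}{2}z^2-\tfrac{\alpha^{11/21}}{2}z\}$, and a short computation shows this equals $\sqrt{\alpha}\exp\{-\tfrac{\alpha}{2}t^2+\tfrac{\alpha^{11/21}}{2}t\}$---the opposite sign on the linear term. That corrected version (and its analogue with $+\tfrac{\alpha^{11/21}}{2}|t|$ in the second range) follows immediately from your identity via $g(t)\ge\tfrac12t^2$ for $t\ge0$ and $g(t)\ge c_1t^2$ on $(-1-\alpha^{-10/21},0)$, and it is all that the application to Theorem~\ref{tres} requires.
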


\begin{proof}
Indeed, we use Stirling's formula
to see that
$w_{\alpha,\lambda}(
e^x)e^{x\sigma}$ is bounded by a
positive constant times
\begin{eqnarray*}
B&=&
\sqrt{\frac{\alpha}{2\pi}}\exp\Big\{
(\alpha+\sigma-1)x-\frac{1}{\lambda}
e^x+\alpha\log\Big(\frac{e}{\lambda
\alpha}\Big)\Big\}\\
\vbox{\kern.8cm}
&=&
\sqrt{\frac{\alpha}{2\pi}}\exp\Big\{
\alpha(x-\log a)
-\frac{1}{\lambda}
e^x+\alpha\Big\}
e^{x(\sigma-1)}
\Big(1+\frac{\sigma-1}{\alpha}\Big)^\alpha.
\end{eqnarray*}
Let $x=\log a+
\alpha^{-\frac{10}{21}}+z$ 
where $z\geq0$. Then, 
\begin{eqnarray*}
B &\ll&
\sqrt{\alpha}\exp\Big\{
\alpha
\big(\alpha^{-\frac{10}{21}}+
z\big)-(\alpha+\sigma-1)
\kern.03cm
e^{z+\alpha^{-\frac{10}{21}}}
+\alpha\Big\}\\
\vbox{\kern.8cm}
&\ll&
\sqrt{\alpha}\exp\Big\{
\alpha^{\frac{11}{21}}+\alpha z
-(\alpha+\sigma-1)\big(1+
\alpha^{-\frac{10}{21}}
\big)\Big(1+z+\frac{1}{2}z^2\Big)
+\alpha
\Big\}\\
\vbox{\kern.8cm}
&\ll&
\sqrt{\alpha}
\exp\Big\{-\frac{\alpha}{2}
z^2-\frac{\alpha^{\frac{11}{21}}}
{2}z\Big\}.
\end{eqnarray*}
Let $x=\log a-
\alpha^{-\frac{10}{21}}-z$ 
where $1>z\geq0$. Then,
\begin{eqnarray*}
B &\ll&
\sqrt{\alpha}\exp\Big\{-
\alpha^{\frac{11}{21}}-
\alpha z
-(\alpha+\sigma-1)\big(1-
\alpha^{-\frac{10}{21}}
\big)\Big(1-z+\frac{1}{2}z^2\Big)
+\alpha
\Big\}\\
\vbox{\kern.8cm}
&\ll&
\sqrt{\alpha}
\exp\Big\{-\frac{\alpha}{4}
z^2-\frac{\alpha^{\frac{11}{21}}}
{2}z\Big\}.
\end{eqnarray*}
Let $x=\log a-
\alpha^{-\frac{10}{21}}-1-z$ 
where $z\geq0$. Then, 
\[
B \ll
\sqrt{\alpha}\exp\Big\{
\frac{1}{2}\alpha(-1-z)+\alpha
\Big\},
\] 
and this is as stated in the lemma.
\end{proof}

In order to apply Heisenberg's
inequality to the dispersion of
$\widetilde{w}_\alpha(\gamma)$, 
we must consider 
$\widetilde{w}_\alpha(\gamma)$
as a Fourier transform of an
appropriate function. For this
end, we notice that a Mellin
transform $f^\m(s)=\int_0^\infty
f(x)\kern.03cm x^{s-1}\,dx$ is related
to a Fourier transform by
means of the relation
\[
[f(x)]^\m(\sigma+it)=
[f(e^{x})
e^{x\sigma}]^\f
\Big(-\frac{t}{2\pi}\Big).
\]
On the other hand,
we have, with $s=1/2+it$,
\[
\widetilde{w}_\alpha(t)=
\lambda^{\frac{1}{2}-it}
\inte_0^\infty
w_{\alpha,\lambda}(x)\kern.03cm
x^{s-1}\>dx.
\]
Therefore
\[
\widetilde{w}_\alpha(t)=
\lambda^{\frac{1}{2}-it}
[w_{\alpha,\lambda}(
e^x)e^{\frac{1}{2}x}]^\f
\Big(-\frac{t}{2\pi}\Big).
\]
Since the dispersion of
a function does not change
when it is multiplied by a factor
$\lambda^{\frac{1}{2}-it}$,
then we have,
\[
\d[\widetilde{w}_\alpha] =
\d\Big[
[w_{\alpha,\lambda}(
e^x)e^{\frac{1}{2}x}]^\f
\Big(-\frac{t}{2\pi}\Big)\Big].
\]
Now we notice that
because of lemmas~\ref{lema1}
and~\ref{lemaB} we can write,
\[
w_{\alpha,\lambda}^\f=
W^\f\Big\{1+O\Big(
\frac{1}{\alpha^{
\frac{3}{7}}}\Big)\Big\}+
O(e^{-\frac{1}{4}\alpha^{\frac{1}{21}}})=
W^\f\Big\{1+O\Big(
\frac{1}{\alpha^{
\frac{3}{7}}}\Big)\Big\},
\]
as $\alpha\to\infty$. Therefore,
for the dispersion of $\widetilde{w}_\alpha$
we have
\[
\d[\widetilde{w}_\alpha]=
\d[W^\f]\Big\{1+O\Big(
\frac{1}{\alpha^{
\frac{3}{7}}}\Big)\Big\}.
\]
Thus, by
Heisenberg's inequality~(\ref{heisenberg}),
\[
\d[\widetilde{w}_\alpha]
\Big\{1+O\Big(
\frac{1}{\alpha^{
\frac{3}{7}}}\Big)\Big\}
=\d\Big[W^\f
\big(-\frac{t}{2\pi}\big)\Big]
=4\pi^2\kern.05cm\d[W^\f
(t)]\geq
\frac{1}{4\kern.05cm\d[W]}
=\frac{\alpha}{2}-\frac{1}{4}.
\]
Hence,
\[
\d[\widetilde{w}_\alpha]\geq
\Big(
\frac{\alpha}{2}-\frac{1}{4}
\Big)
\Big\{1+O\Big(
\frac{1}{\alpha^{
\frac{3}{7}}}\Big)\Big\}.
\]
This finishes the proof of
theorem~\ref{tres}.


\begin{thebibliography}{99}
\bibitem{Andrews}
Andrews, G.E.; Askey, R.; Roy, R.
Special functions. 
Cambridge University Press, 1999.
\bibitem{Aryan}
Aryan, F.
\emph{On an extension of the Landau-Gonek formula}. 
J. Number Theory 233 (2022), 389-404.
\bibitem{Chandrasekharan}
Chandrasekharan, K. 
\emph{Arithmetical functions}. 
Springer-Verlag, 1970.
\bibitem{Davenport}
Davenport, H. 
\emph{Multiplicative number theory}. 
Third edition. 
Springer-Verlag, 2000.
\bibitem{Fujii}
Fujii, A.
\emph{On a theorem of Landau}.
Proc. Japan Acad. Ser. A Math. Sci. 65 
(1989), no. 2, 51-54.
\bibitem{Gonek2}
Gonek, S.M. 
\emph{An explicit formula of Landau and 
its applications to the theory of the zeta function}.
Contemp. Math. 143 (1993) 395-413.
\bibitem{Igari}
Igari, S.
\emph{Real analysis with an 
introduction to wavelet theory}.
American Mathematical Society, 1998. 
\bibitem{Ingham}
Ingham, A.E.
\emph{The distribution of prime numbers}.
Cambridge University Press, 1990.
\bibitem{Kaczorowski}
Kaczorowski, J.; Languasco, A.; Perelli, A.
\emph{A note on Landau's formula}.
Funct. Approx. Comment. Math. 28 (2000), 173-186.
\bibitem{Landau}
Landau, E. 
\emph{Über die Nullstellen der Zetafunktion}. 
Math. Ann. 71 (1912) 548-564.
\bibitem{Lehman}
Lehman, R.S. 
\emph{On the difference} $\pi(x)-li(x)$. 
Acta Arith. 11 (1966), 397-410.
\bibitem{Montgomery}
Montgomery, H.L.; Vaughan, R.C.
\emph{Multiplicative number theory. 
I. Classical theory}.
Cambridge University Press, Cambridge, 2007.
\bibitem{Oberhettinger}
Oberhettinger, F. 
\emph{Tables of Fourier transforms and 
Fourier transforms of distributions}. 
Springer-Verlag, Berlin, 1990.
\bibitem{Odlyzko}
Odlyzko, A.
\emph{Tables of zeros of the Riemann zeta function}. \\
www.dtc.umn.edu/$\sim$odlyzko/zeta\_tables/index.html.
\bibitem{Mangoldt}
von Mangoldt, H.
\emph{Zu Riemanns Abhandlung 
``Ueber die Anzahl der Primzahlen unter 
einer gegebenen Grösse"}. 
J. Reine Angew. Math. 114 (1895), 255-305.
\end{thebibliography}
\end{document}